\numberwithin{equation}{section}
\numberwithin{figure}{section}
\theoremstyle{plain}
\newtheorem{thm}{\protect\theoremname}[section]
  \theoremstyle{plain}
  \newtheorem{lem}[thm]{\protect\lemmaname}
  \theoremstyle{plain}
  \newtheorem{prop}[thm]{\protect\propositionname}
  \theoremstyle{remark}
  \newtheorem{rem}[thm]{\protect\remarkname}
  \theoremstyle{plain}
  \newtheorem{cor}[thm]{\protect\corollaryname}
\providecommand{\lemmaname}{Lemma}
  \providecommand{\propositionname}{Proposition}
  \providecommand{\remarkname}{Remark}
\providecommand{\theoremname}{Theorem}
\providecommand{\lemmaname}{Lemma}
  \providecommand{\propositionname}{Proposition}
  \providecommand{\remarkname}{Remark}
\providecommand{\theoremname}{Theorem}
\providecommand{\corollaryname}{Corollary}
  \providecommand{\lemmaname}{Lemma}
  \providecommand{\propositionname}{Proposition}
  \providecommand{\remarkname}{Remark}
\providecommand{\theoremname}{Theorem}
  \providecommand{\corollaryname}{Corollary}
  \providecommand{\lemmaname}{Lemma}
  \providecommand{\propositionname}{Proposition}
  \providecommand{\remarkname}{Remark}
\providecommand{\theoremname}{Theorem}
\begin{document}

\title{Hyperplanes in the space of convergent sequences and preduals of $\ell_1$}

\author{E. Casini}

\address{Dipartimento di Scienza e Alta Tecnologia, Università dell'Insubria,
via Valleggio 11, 22100 Como, Italy }

\email{emanuele.casini@uninsunbria.it}

\author{E. Miglierina}

\address{Dipartimento di Discipline Matematiche, Finanza Matematica ed Econometria,
Università Cattolica del Sacro Cuore, Via Necchi 9, 20123 Milano,
Italy }

\email{enrico.miglierina@unicatt.it}

\author{\L. Piasecki}

\address{Institute of Mathematics, Maria Curie-Sk{\l }odowska University, pl. Marii Curie-Sk{\l }odowskiej 5 
20-031 Lublin, Poland}

\email{piasecki@hektor.umcs.lublin.pl}
\begin{abstract}
The main aim of the present paper is to investigate various structural properties
of hyperplanes of $c$, the Banach space of the convergent sequences. In particular, we give an explicit formula for the projection constants and we prove that an hyperplane of $c$ is isometric to the whole space if and only if it is $1$-complemented. Moreover, we obtain the classification
of those hyperplanes for which their duals are isometric to
$\ell_{1}$ and we give a complete description of the preduals
of $\ell_{1}$ under the assumption that the standard basis of $\ell_{1}$
is weak$^{*}$-convergent.
\end{abstract}

\subjclass[2010]{Primary: 46B45. Secondary:46B04.}

\keywords{Space of convergent sequences, Projection, $\ell_{1}$- predual,
Hyperplane. }

\maketitle

\section{Introduction}

The present paper is mainly devoted to investigate the structural
properties of the closed hyperplanes of the Banach space $c$ of the
convergent sequences. This study reveals that this class of spaces
are very interesting since it provides a complete isometric description
of the preduals of the Banach space $\ell_{1}$ when it is assumed
that the standard basis of this space is weak$^{*}$-convergent.

The starting point of our work is a result that lists some properties
about the hyperplanes of $c_{0}$. More specifically, the following
essentially known theorem summarizes some characterizations of the
$1$-complemented hyperplanes in $c_{0}$. We prefer to give a short
proof of this result for the sake of the convenience of the readers.
Indeed, some of the quoted known facts are scattered throughout the
literature and a simple remark, based on a well known property of
$\ell_{\infty}$, is easy but not immediate.
\begin{thm}
\label{thm:Hyperplanes of c_0}Let $f\in\ell_{1}$ be such that $\left\Vert f\right\Vert _{\ell_{1}}=1$.
Let us consider the hyperplane $V_{f}=\ker f\subset c_{0}$. The following
statements are equivalent 
\begin{enumerate}
\item $V_{f}$ is $1$-complemented, 
\item $V_{f}^{*}$ is isometric to $\ell_{1}$, 
\item there exists an index $j_{0}$ such that $\left|f_{j_{0}}\right|\geq\frac{1}{2}$, 
\item $V_{f}$ is isometric to $c_{0}$. 
\end{enumerate}
\end{thm}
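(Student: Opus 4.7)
The plan is to establish the four equivalences by running the cycle $(3)\Rightarrow(1)\Rightarrow(3)$ and $(3)\Rightarrow(4)\Rightarrow(2)\Rightarrow(3)$. Two of the steps are direct constructions. For $(3)\Rightarrow(1)$, assuming $|f_{j_{0}}|\geq\tfrac{1}{2}$, I would take the explicit projection $Px=x-f(x)f_{j_{0}}^{-1}e_{j_{0}}$: a one-line check gives $P^{2}=P$ and $P(c_{0})\subset V_{f}$, and the elementary bound $|(Px)_{j_{0}}|\leq((1-|f_{j_{0}}|)/|f_{j_{0}}|)\|x\|_{\infty}\leq\|x\|_{\infty}$ shows $\|P\|=1$. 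For $(3)\Rightarrow(4)$, the map $T\colon V_{f}\to c_{0}(\N\setminus\{j_{0}\})$ defined by $Tx=(x_{k})_{k\neq j_{0}}$ is linear, surjective (recover $x_{j_{0}}$ from $f(x)=0$), and isometric: from $|f_{j_{0}}||x_{j_{0}}|=|\sum_{k\neq j_{0}}f_{k}x_{k}|\leq(1-|f_{j_{0}}|)\sup_{k\neq j_{0}}|x_{k}|$ together with $|f_{j_{0}}|\geq\tfrac{1}{2}$ one gets $|x_{j_{0}}|\leq\sup_{k\neq j_{0}}|x_{k}|$, hence $\|x\|_{\infty}=\|Tx\|_{\infty}$. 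The implication $(4)\Rightarrow(2)$ is automatic, since the dual passes through an isometric isomorphism.

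For $(1)\Rightarrow(3)$ I would argue dually. Any norm-one projection onto $V_{f}$ has the form $Px=x-f(x)u$ for some $u\in c_{0}$ with $f(u)=1$; so $P^{*}g=g-g(u)f$ for $g\in\ell_{1}$, and $\|P^{*}\|=1$ applied to $g=e_{n}$ gives
\[
\|e_{n}-u_{n}f\|_{\ell_{1}}=|1-u_{n}f_{n}|+|u_{n}|(1-|f_{n}|)\leq 1.
\]
A reverse-triangle argument applied separately in the cases $|u_{n}f_{n}|\leq 1$ and $|u_{n}f_{n}|>1$ forces $|u_{n}|(1-2|f_{n}|)\leq 0$, so $u_{n}=0$ whenever $|f_{n}|<\tfrac{1}{2}$. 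If $(3)$ fails, then $u\equiv 0$, contradicting $f(u)=1$.

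The central implication is $(2)\Rightarrow(3)$, which I would argue by contradiction using the extremal structure of $V_{f}^{*}=\ell_{1}/\langle f\rangle$. Assume $|f_{n}|<\tfrac{1}{2}$ for every $n$. A one-variable minimization of $\alpha\mapsto\|e_{n}-\alpha f\|_{\ell_{1}}$ shows $\|[e_{n}]\|_{V_{f}^{*}}=1$ and, crucially, every $\pm[e_{n}]$ is an extreme point of $B_{V_{f}^{*}}$: if $[e_{n}]=\tfrac{1}{2}([y]+[z])$ with $[y],[z]\in B_{V_{f}^{*}}$, then taking norm-attaining representatives $\tilde{y},\tilde{z}$ yields $\tilde{y}+\tilde{z}=2e_{n}+\gamma f$ together with the triangle equality $\|\tilde{y}\|_{1}=\|\tilde{z}\|_{1}=1$, and the resulting equation $|2+\gamma f_{n}|+|\gamma|(1-|f_{n}|)=2$ admits only $\gamma=0$ under the assumption $|f_{n}|<\tfrac{1}{2}$, giving $\tilde{y}=\tilde{z}=e_{n}$. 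Moreover, $[e_{n}]$ and $[e_{m}]$ are neither equal nor antipodal for $n\neq m$, since $e_{n}\pm e_{m}\in\langle f\rangle$ would force $|f_{n}|=|f_{m}|=\tfrac{1}{2}$. Any isometric isomorphism $V_{f}^{*}\cong\ell_{1}$ sends extreme points of the unit ball bijectively onto $\{\pm e_{m}'\}_{m}$, so the countable family $\{[e_{n}]\}_{n}$ maps to $\{\varepsilon_{n}e_{m_{n}}'\}_{n}$ with pairwise distinct indices $m_{n}$. Pushing the identity $\sum_{n}f_{n}[e_{n}]=[f]=0$ through the isomorphism yields $\sum_{n}f_{n}\varepsilon_{n}e_{m_{n}}'=0$ in $\ell_{1}$; comparing coordinates forces $f_{n}=0$ for all $n$, contradicting $\|f\|_{\ell_{1}}=1$.

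The main obstacle is this last step: one must simultaneously describe the extreme points of the codimension-one quotient $\ell_{1}/\langle f\rangle$ and exclude accidental identifications among the $[e_{n}]$, with both of these depending on the threshold $\tfrac{1}{2}$ in an essential way.
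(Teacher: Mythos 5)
Your proof is correct, but it takes a genuinely different and essentially self-contained route, whereas the paper assembles the theorem from three external results. The paper cites Blatter--Cheney for $(1)\Leftrightarrow(3)$, cites Lindenstrauss--Tzafriri for $(1)\Rightarrow(4)$ (every infinite-dimensional $1$-complemented subspace of $c_{0}$ is isometric to $c_{0}$), and proves only $(2)\Rightarrow(1)$, doing so by dualizing twice: an isometry $V_{f}^{*}\simeq\ell_{1}$ gives an isometric copy of $\ell_{\infty}$ inside $V_{f}^{**}\subset\ell_{\infty}$, the injectivity of $\ell_{\infty}$ yields a norm-one projection of $\ell_{\infty}$ onto $V_{f}^{**}=[f]^{\perp}$, and a result of Baronti pulls this back to a norm-one projection of $c_{0}$ onto $V_{f}$. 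You instead prove everything directly: the explicit projection and the coordinate-deleting isometry for $(3)\Rightarrow(1)$ and $(3)\Rightarrow(4)$ are exactly the computations one would expect (and mirror what the paper later does for $c$ in Propositions \ref{thm:1complemented in c} and \ref{prop:isometry W c}); your $(1)\Rightarrow(3)$ via $\|P^{*}e_{n}\|_{\ell_{1}}=|1-u_{n}f_{n}|+|u_{n}|(1-|f_{n}|)\leq1$ is in substance the Blatter--Cheney argument; and your $(2)\Rightarrow(3)$ replaces the injectivity/Baronti machinery with an extreme-point analysis of $\ell_{1}/[f]$. That last argument is the only delicate one and it holds up: norm-attaining representatives exist because $[f]$ is one-dimensional, the reverse-triangle estimate $\|2e_{n}+\gamma f\|_{1}\geq2+|\gamma|(1-2|f_{n}|)$ forces $\gamma=0$, the classes $[e_{n}]$ are pairwise non-antipodal under the standing hypothesis, and pushing $\sum_{n}f_{n}[e_{n}]=0$ through the isometry kills every $f_{n}$. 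What your approach buys is elementarity and independence from the literature; what the paper's buys is brevity and a technique (injectivity of $\ell_{\infty}$ plus Baronti) that generalizes beyond hyperplanes to arbitrary $w^{*}$-closed annihilators.
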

\begin{proof} First, we recall that (1) is equivalent to (3) (see \cite{BC1974})
and (1) implies (4) since the $1$-complemented infinite dimensional subspaces of $c_{0}$
are isometric to the whole $c_{0}$ (see, e.g., \cite{LT}). Trivially,
(4) implies (2). Finally, we show that (2) implies (1). By (2) there exists
an isometry $T:V_{f}^{*}\rightarrow\ell_{1}$ hence also $T^{*}:\ell_{\infty}\rightarrow V_{f}^{**}$
is an isometry. By Proposition 5.13, p.142 in \cite{FHHM}, there
exists a norm-$1$ projection 
\[
P:\ell_{\infty}\rightarrow V_{f}^{**}.
\]

Therefore, since $V_{f}^{**}=\left[f\right]^{\bot}=\left\{ x^{**}\in\ell_{\infty}:x^{**}(f)=0\right\} $,
Corollary 2 in \cite{B1988} implies that $V_{f}$ is $1$-complemented
in $c_{0}$. \end{proof} 

One may ask whether a similar result is true when the space $c_{0}$
is replaced by $c$. Therefore, the main aim of the present paper
is to investigate the properties of hyperplanes in $c$. In particular,
we would like to determine what the implications of Theorem \ref{thm:Hyperplanes of c_0}
are preserved when we consider $c$ instead of $c_{0}$. This study
allow us to show that the behavior of hyperplanes in $c$ is much
richer than in $c_{0}$. Indeed, we will show that the counterpart
in $c$ of Theorem \ref{thm:Hyperplanes of c_0} is the following
result. \begin{thm} \label{thm:Main theorem}Let $f\in\ell_{1}$
be such that $\left\Vert f\right\Vert _{\ell_{1}}=1$ and let $W_{f}=\ker f\subset c$.
Let us consider the following properties: 
\begin{enumerate}
\item $W_{f}$ is $1$-complemented; 
\item $W_{f}$ is isometric to $c$; 
\item there exists $j_{0}\geq2$ such that $\left|f_{j_{0}}\right|\geq\frac{1}{2}$; 
\item $W_{f}^{*}$ is isometric to $\ell_{1}$; 
\item there exists $j_{0}\geq1$ such that $\left|f_{j_{0}}\right|\geq\frac{1}{2}$; 
\item $W_{f}$ is isometric to $c_{0}$; 
\item $\inf_{P}\left\Vert P\right\Vert =2$ (where $P:c\rightarrow W_{f}$
is a projection); 
\item $\left|f_{1}\right|=1,\: f_{j}=0$ for every $j\geq2$. 
\end{enumerate}
Then the following implications hold 
\[
(1)\Leftrightarrow(2)\Leftrightarrow(3)\Rightarrow(4)\Leftrightarrow(5)\Leftarrow(6)\Leftrightarrow(7)\Leftrightarrow(8).
\]

\end{thm}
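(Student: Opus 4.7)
The plan is to organize the eight conditions into three clusters matching the equivalence classes in the chain, after fixing the identification $c^*\cong\ell_1$ under which $f=(f_j)_{j\geq1}$ acts on $x\in c$ by $f(x) = f_1\lim_n x_n + \sum_{j\geq2} f_j x_{j-1}$, so that $\|f\|_{\ell_1}=\sum|f_j|$ is the operator norm, condition (8) singles out the pure limit functional, and (3) versus (5) differ precisely in whether the dominant coefficient is allowed on the limit coordinate. The trivial implications $(3)\Rightarrow(5)$, $(8)\Rightarrow(6)$, and $(2)\Rightarrow(4)$ can be dispatched at the outset, and $(8)\Rightarrow(7)$ reduces to the classical fact that the relative projection constant of $c_0$ inside $c$ equals $2$.

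For the first cluster $(1)\Leftrightarrow(2)\Leftrightarrow(3)$, I would model the argument on Theorem~\ref{thm:Hyperplanes of c_0} while tracking why the limit coordinate is forbidden. Assuming (3) with $|f_{j_0}|\geq 1/2$ for some $j_0\geq 2$, the map $Px = x - (f(x)/f_{j_0})\,u_{j_0-1}$, where $u_k$ denotes the $k$-th canonical vector in $c_0\subset c$, is a projection onto $W_f$ whose norm equals $1$ exactly when $|f_{j_0}|\geq\sum_{j\neq j_0}|f_j|$; the restriction $j_0\geq 2$ is essential, since no ``limit direction'' of norm $1$ can be used to absorb $f$ without distorting the limit. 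For $(3)\Rightarrow(2)$ I would exhibit an explicit isometry $W_f\to c$ obtained by deleting the $(j_0-1)$-st coordinate and recovering it from the linear constraint $f(x)=0$; the very same bound $|f_{j_0}|\geq 1/2$ is what renders this map norm-preserving. The converse directions $(1)\Rightarrow(3)$ and $(2)\Rightarrow(3)$ come from rigidity: analyzing the form $I-f\otimes v$ of a norm-one rank-one perturbation forces $v$ to be supported on a single non-limit index, and similarly, matching extreme points of $B_c$ with those of an isometric copy $W_f\subset c$ excludes coefficient concentration on the limit.

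For the cluster $(6)\Leftrightarrow(7)\Leftrightarrow(8)$ I would anchor everything at (8), where $W_f=c_0$ literally. Then $(8)\Rightarrow(6)$ is a tautology and $(8)\Rightarrow(7)$ is the classical projection-constant computation. The implication $(6)\Rightarrow(8)$ is a comparison of extremal structure: $B_{c_0}$ has no extreme points, whereas whenever some $f_j$ with $j\geq 2$ is nonzero one can produce eventually $\pm1$-valued sequences in $W_f$ by choosing signs satisfying the single linear constraint $f(x)=0$, and such sequences are extreme in $B_c$, hence in $B_{W_f}$. For $(7)\Rightarrow(8)$ I would invoke the explicit formula for $\inf_P\|P\|$ announced in the abstract and observe that the formula gives a value strictly less than $2$ as soon as some $|f_j|$ with $j\geq 2$ is positive, so the extremal value $2$ pins $f$ down to the pure limit functional.

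The cluster $(4)\Leftrightarrow(5)$ is the delicate one, and I expect $(4)\Rightarrow(5)$ to be the main obstacle. The portion of $(5)\Rightarrow(4)$ with $j_0\geq 2$ already follows from $(3)\Rightarrow(2)\Rightarrow(4)$; the residual case $|f_1|\geq 1/2$ requires a direct construction of an isometry $W_f^*\cong\ell_1$. Writing $W_f^*\cong c^*/[f]\cong\ell_1/[f]$, a natural candidate sends the coset of $g=(g_j)\in\ell_1$ to $(g_j-(g_1/f_1)f_j)_{j\geq2}$, and verifying that this descends to a surjective linear isometry is precisely what the bound $|f_1|\geq 1/2$ controls. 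For $(4)\Rightarrow(5)$ I would adapt the bidual argument of Theorem~\ref{thm:Hyperplanes of c_0}: an isometry $W_f^*\cong\ell_1$ dualizes to an isometric embedding $\ell_\infty\hookrightarrow W_f^{**}$ and, via Proposition~5.13 of \cite{FHHM}, yields a norm-$1$ projection $\ell_\infty\to W_f^{**}$; Corollary~2 of \cite{B1988} then has to convert this into the coefficient condition for some $j_0\geq 1$. The genuine technical difficulty is that $W_f^{**}\subset\ell_\infty$ sits differently than in the $c_0$ case because of the extra dimension contributed by the limit, so extracting the right coefficient inequality from the projection requires a more careful adaptation than in the $c_0$ setting.
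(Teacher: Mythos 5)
Your overall architecture (three clusters joined by the trivial bridges) matches the paper's, and several pieces are sound: the explicit norm-one projection and the coordinate-deleting isometry for $(3)\Rightarrow(1)$ and $(3)\Rightarrow(2)$ are exactly Propositions \ref{thm:1complemented in c} and \ref{prop:isometry W c}, and $(7)\Leftrightarrow(8)$ via the projection-constant formula is Propositions \ref{prop:projection constant} and \ref{prop:W isometric to c_0}. But two of your steps have genuine gaps. The clearest one is $(6)\Rightarrow(8)$: your key claim fails. Take $f=\left(\frac{2}{3},\frac{1}{3},0,0,\dots\right)$, so that $W_f=\{x\in c:\ x_1=-2\lim_i x_i\}$. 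Here $f_2\neq 0$, yet $W_f$ contains no sequence all of whose coordinates have modulus $1$ (such a sequence has limit $\pm1$, forcing $|x_1|=2$); worse, $B_{W_f}$ has \emph{no} extreme points at all, since $e_j\in W_f$ for every $j\geq 2$ forces any extreme point to satisfy $|x_j|=1$ for all $j\geq2$, hence $|x_1|=2>1$. Since this $W_f$ is nevertheless not isometric to $c_0$, the absence of extreme points does not characterize condition (8), and your contrapositive collapses. The paper proves $(6)\Rightarrow(8)$ dually: if $V\subseteq\mathcal{C}(K)$ is isometric to $c_0$, the adjoint isometry sends the weak$^*$-null basis $\{e_n\}$ to $\pm$ evaluation functionals (extreme points of $B_{V^*}$), and compactness of $K$ yields a point $p$ with $V\subseteq\{g:g(p)=0\}$; codimension one then forces $W_f=\{x:\lim_i x_i=0\}$.

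The second gap is $(2)\Rightarrow(3)$, the hardest implication of the theorem: ``matching extreme points'' is not a proof, and the example above shows that extreme-point bookkeeping in these hyperplanes is too coarse to detect the relevant coefficient conditions. The paper needs a Holszty\'nski-type representation theorem for into-isometries of $\mathcal{C}(\mathbb{N}^{*})$ with $1$-codimensional range (Theorem \ref{thm:Gutek}, from Gutek--Hart--Jamison--Rajagopalan), writing $(Tx)_n=\varepsilon_n x_{\varphi(n)}$ on a cofinite subset $M$ and running a case analysis on $\varphi$ to extract $|f_{j_0}|\geq\frac{1}{2}$ for some $j_0\geq2$; without some such structure theorem your sketch does not close. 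Finally, on $(4)\Leftrightarrow(5)$: your quotient map for $(5)\Rightarrow(4)$ does work (the bound $|f_1|\geq\frac{1}{2}$ makes $t=g_1/f_1$ a minimizer of $t\mapsto\|g-tf\|_{\ell_1}$), but the bidual adaptation you flag as the ``main obstacle'' for $(4)\Rightarrow(5)$ is unnecessary: the paper simply notes that $W_f^{*}\simeq\ell_1/[f]\simeq V_f^{*}$, because $c^{*}$ and $c_0^{*}$ are the same $\ell_1$ and $f$ spans the same line in both, so both directions of $(4)\Leftrightarrow(5)$ reduce at once to Theorem \ref{thm:Hyperplanes of c_0} --- and this is precisely where the asymmetry between (3) (only $j_0\geq2$) and (5) (any $j_0\geq1$) comes from.
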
 The previous theorem is the main result of our paper and
in order to prove it we need a number of intermediate results that
are interesting in themselves. First of all, we investigate the properties
of the projections on the hyperplanes of $c$. Indeed, in Section
\ref{sec:Projectons}, by following the approach outlined in \cite{BC1974}
for $c_{0}$, we characterize the $1$-complemented hyperplanes of
$c$ and we establish a formula to compute the projection constant
of a given hyperplane in $c$. The second step (Section \ref{sec:Isometries c and c_0})
is to study the hyperplanes of $c$ that are isometric to $c$ and
$c_{0}$ respectively. It is worth to mention that such situations
appear only when the projection constant of the hyperplane attains
respectively its minimum and maximum. Indeed we show that a hyperplane
of $c$ is isometric to $c$ itself if and only if it is $1$-complemented.
Moreover, the only hyperplane of $c$ that is an isometric copy of
$c_{0}$ has projection constant $2$ and it is the ``natural''
one, i.e. the subspace of $c$ whose elements are the vanishing sequences.
At the beginning of Section \ref{sec:Fixed-point}, Proposition \ref{thm:hyperplanes with dual equal to l_1}
characterizes the hyperplanes of $c$ whose duals are isometric copies
of $\ell_{1}$. The most interesting situations among the class of
the spaces $W_{f}$ such that its dual is $\ell_{1}$ occur when $W_{f}$
is isometric neither to $c$ nor to $c_{0}$. Therefore in these particular
cases we compute the $\sigma(\ell_{1},W_{f})$-limit of the standard
basis of $\ell_{1}$ by explicitly describing the duality between
$W_{f}$ and $\ell_{1}$ (see Theorem \ref{thm:convergence of e_n}).
This theorem allow us to obtain two interesting structural results.
First, by using a result of \cite{A1992}, we show that a $\ell_{1}$-predual
space $X$ is an isometric copy of $W_{f}$ for a suitable choice
of the functional $f\in\ell_{1}$, whenever the standard basis of
$\ell_{1}\simeq X^{*}$ is assumed to be weak$^{*}$-convergent. Second,
we characterize the hyperplanes $W_{f}$ that are $\ell_{1}$-preduals
and are isometric to a quotient of some $C(\alpha)$ where $C(\alpha)$
denotes the space of all continuous real-valued functions on the ordinals
less than or equal to $\alpha$ with the order topology.

In the sequel, whenever $X$ is a Banach space, $B_{X}$ denotes the
closed unit ball of $X$ and $\left[x\right]$ the linear span of
a vector $x\in X$. We write $X\simeq Y$ when $X$ and $Y$ are isometrically
isomorphic. We also use standard duality between $c$ and $\ell_{1}$,
that is, for $x\in c$ and $f\in \ell_{1}$: $f(x)={\displaystyle \sum_{i=0}^{\infty}}f_{i+1}x_{i}$
where $x_{0}=\lim_{i\to\infty}x_{i}$. Throughout all the paper the
hyperplane $W_{f}\subset c$ stands for the kernel of $f\in\ell_{1}$
with $\left\Vert f\right\Vert _{\ell_{1}}=1$.

\section{\label{sec:Projectons}The projections on the hyperplanes of $c$}

The aim of this section is to extend to $c$ the study of the projections
onto the hyperplanes of $c_{0}$ developed in \cite{BC1974}.

First, the following lemma establishes a formula to compute the norm
of a given projection on an hyperplane of $c$. 
\begin{lem}
\label{lem:norm projections}A projection of $c$ onto $W_{f}$ has
the form 
\[
P_{z}(x)=x-f(x)z
\]
for some $z\in f^{-1}(1)$. Moreover it holds 
\begin{equation}
\left\Vert P_{z}\right\Vert =\sup_{i\geq1}\left\{ \left|1-f_{i+1}z_{i}\right|+\left|z_{i}\right|\left(1-\left|f_{i+1}\right|\right)\right\} .\label{eq:norm of P}
\end{equation}

\end{lem}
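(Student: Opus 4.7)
The plan is to first identify the form of any projection $P\colon c\to W_f$, and then to compute its norm entry-by-entry using the explicit duality between $c$ and $\ell_{1}$ given in the Introduction. For the structural claim, I observe that since $W_{f}=\ker f$ has codimension one, the kernel of $P$ is one-dimensional; let $z$ be a generator. For every $x\in c$ the vector $x-P(x)$ lies in $\ker P=[z]$, so $x-P(x)=\phi(x)\,z$ for a bounded linear functional $\phi$ on $c$. Because $P$ fixes $W_{f}$ pointwise, $\phi$ vanishes on $\ker f$, hence $\phi=\lambda f$ for some nonzero scalar $\lambda$. Applying this to $x=z$ and using $P(z)=0$ gives $\lambda f(z)=1$; rescaling $z$ by $\lambda$ (still in $\ker P$), I may assume $\phi=f$ and $f(z)=1$. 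This yields $P(x)=x-f(x)z$ with $z\in f^{-1}(1)$, as claimed.

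For the norm formula, recall that $\|y\|_{c}=\sup_{i\geq 1}|y_{i}|$, so
\[
\|P_{z}\|=\sup_{\|x\|_{c}\leq 1}\sup_{i\geq 1}|x_{i}-f(x)z_{i}|=\sup_{i\geq 1}\|g_{i}\|_{c^{*}},\qquad g_{i}(x):=x_{i}-z_{i}f(x).
\]
Using the Introduction's pairing $f(x)=f_{1}x_{0}+\sum_{j\geq 1}f_{j+1}x_{j}$ (where $x_{0}=\lim x$) and separating the $j=i$ term, I would read off the coefficients of $g_{i}$ in the canonical $c^{*}\simeq\ell_{1}$ representation: $-z_{i}f_{1}$ on the limit coordinate, $1-z_{i}f_{i+1}$ on the $i$-th coordinate, and $-z_{i}f_{j+1}$ on the $j$-th coordinate for $j\geq 1$, $j\neq i$. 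Since the norm of a functional on $c$ is precisely the $\ell_{1}$-sum of all these coefficients (including the one associated with the limit), a direct computation gives
\[
\|g_{i}\|_{c^{*}}=|1-z_{i}f_{i+1}|+|z_{i}|\Bigl(|f_{1}|+\!\!\sum_{j\geq 1,\,j\neq i}\!\!|f_{j+1}|\Bigr)=|1-z_{i}f_{i+1}|+|z_{i}|\bigl(1-|f_{i+1}|\bigr),
\]
where the last equality uses $\|f\|_{\ell_{1}}=\sum_{k\geq 1}|f_{k}|=1$. Taking the supremum over $i\geq 1$ produces the desired formula.

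The only genuine obstacle is bookkeeping: the paper's duality convention places $f_{1}$ against the limit $x_{0}=\lim x$ rather than against any sequence entry, so one must carefully include the contribution $|z_{i}f_{1}|$ when computing $\|g_{i}\|_{c^{*}}$ and notice that this is exactly the term that, combined with the tail $\sum_{j\geq 1,\,j\neq i}|f_{j+1}|$, recovers $\|f\|_{\ell_{1}}-|f_{i+1}|=1-|f_{i+1}|$. Once this indexing is tracked correctly, both parts of the lemma follow with no further analytic input.
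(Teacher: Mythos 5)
Your argument is correct, and at its core it follows the same reduction as the paper: both of you write $\|P_{z}\|=\sup_{i\geq 1}\|g_{i}\|_{c^{*}}$ for the coordinate functionals $g_{i}(x)=x_{i}-z_{i}f(x)$, and you handle the one genuine pitfall correctly, namely that $f_{1}$ pairs with the limit $x_{0}=\lim x$, so the contribution $|z_{i}f_{1}|$ is precisely what converts $\sum_{j\geq 1,\,j\neq i}|f_{j+1}|$ into $1-|f_{i+1}|$. The difference lies in how the lower bound $\|g_{i}\|_{c^{*}}\geq\sum_{j\geq 0}|\delta_{ij}-z_{i}f_{j+1}|$ is justified: the paper is self-contained and constructs explicit norming elements $x^{(n,i)}\in B_{c}$ whose first $n$ entries realize the signs $\mathrm{sgn}(\delta_{ij}-f_{j+1}z_{i})$ and whose tail is the constant $\mathrm{sgn}(-f_{1}z_{i})$, then lets $n\to\infty$; you instead invoke the classical isometric identification of $c^{*}$ with $\ell_{1}$ (the norm of $a_{0}\lim x+\sum_{j\geq 1}a_{j}x_{j}$ equals $\sum_{j\geq 0}|a_{j}|$), which is exactly the fact those test vectors reprove. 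Your route is shorter at the price of citing that standard duality. You also give a proof of the structural claim that every projection onto $W_{f}$ has the form $P_{z}$ with $z\in f^{-1}(1)$, via the one-dimensionality of $\ker P$ and the factorization $\phi=\lambda f$ of a functional vanishing on $\ker f$; the paper simply cites this from Blatter--Cheney, and your argument for it is sound.
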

\begin{proof} The first part is well known (see, e.g, \cite{BC1974}).
Now we will prove the formula (\ref{eq:norm of P}). We have 
\[
\left\Vert P_{z}\right\Vert =\sup_{x\in B_{c}}\sup_{i\geq1}\left|\left(P_{z}(x)\right)_{i}\right|=\sup_{x\in B_{c}}\sup_{i\geq1}\left|x_{i}-f(x)z_{i}\right|=
\]
\[
\sup_{x\in B_{c}}\sup_{i\geq1}\left|\sum_{j=0}^{+\infty}\left(\delta_{ij}-f_{j+1}z_{i}\right)x_{j}\right|.
\]
Therefore, it holds 
\begin{equation}
\left\Vert P_{z}\right\Vert \leq\sup_{i\geq1}\left(\sum_{j=0}^{+\infty}\left|\delta_{ij}-f_{j+1}z_{i}\right|\right).\label{eq:upper bound projection norm 1}
\end{equation}
Now let us consider, for every $i\geq1$, the sequences $\left\{ x^{(n,i)}\right\} _{n\geq1}\subset B_{c}$
where $x^{(n,i)}=(x_{1}^{(n,i)},x_{2}^{(n,i)},...)$ is defined by:
\[
\left\{ \begin{array}{ccc}
x_{j}^{(n,i)}={\rm sgn}\left(\delta_{ij}-f_{j+1}z_{i}\right) &  & {\rm for}\, j\leq n\\
x_{j}^{(n,i)}={\rm sgn}(-f_{1}z_{i}) &  & {\rm for}\, j>n
\end{array}\right..
\]
Then we have that, for every integers $n\geq1$, 
\[
\sup_{i\geq1}\left\Vert P_{z}\left(x^{(n,i)}\right)\right\Vert =\sup_{i\geq1}\left|\sum_{j=0}^{+\infty}\left(\delta_{ij}-f_{j+1}z_{i}\right)x_{j}^{(n,i)}\right|\geq\sup_{1\leq i\leq n}\left|\sum_{j=0}^{+\infty}\left(\delta_{ij}-f_{j+1}z_{i}\right)x_{j}^{(n,i)}\right|=
\]
\[
=\sup_{1\leq i\leq n}\left|\sum_{j=0}^{n}\left|\delta_{ij}-f_{j+1}z_{i}\right|-z_{i}{\rm sgn}(-f_{1}z_{i})\sum_{j=n+1}^{+\infty}f_{j+1}\right|.
\]
Therefore, we obtain that

\begin{equation}
\left\Vert P_{z}\right\Vert \geq\sup_{i\geq1}\left(\sum_{j=0}^{+\infty}\left|\delta_{ij}-f_{j+1}z_{i}\right|\right).\label{eq:lower bound projection norm 1}
\end{equation}
Hence, on combining (\ref{eq:upper bound projection norm 1}) and
(\ref{eq:lower bound projection norm 1}), we conclude that 
\[
\left\Vert P_{z}\right\Vert =\sup_{i\geq1}\left(\sum_{j=0}^{+\infty}\left|\delta_{ij}-f_{j+1}z_{i}\right|\right).
\]
Finally, an easy computation shows that 
\[
\left\Vert P_{z}\right\Vert =\sup_{i\geq1}\left\{ \left|1-f_{i+1}z_{i}\right|+\left|z_{i}\right|\sum_{\begin{array}{c}
j=0\\
j\neq i
\end{array}}^{+\infty}\left|f_{j+1}\right|\right\} =
\]

\[
=\sup_{i\geq1}\left\{ \left|1-f_{i+1}z_{i}\right|+\left|z_{i}\right|\left(1-\left|f_{i+1}\right|\right)\right\} .
\]

\end{proof} By means of the previous lemma, we are able to characterize
the $1$-complemented hyperplanes of $c$. 
\begin{prop}
\label{thm:1complemented in c} A norm-$1$ projection of $c$ onto
$W_{f}$ exists if and only if $\left|f_{j}\right|\geq\frac{1}{2}$
for some $j\geq2$. Moreover there exists a unique norm-$1$ projection
of $c$ onto $W_{f}$ if and only if there exists a unique index $j_{0}\geq2$
such that $\left|f_{j_{0}}\right|\geq\frac{1}{2}$.
\end{prop}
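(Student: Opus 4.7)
The plan is to exploit the norm formula from Lemma~\ref{lem:norm projections},
\[
\|P_z\| = \sup_{i\geq 1}\varphi_i(z_i), \qquad \varphi_i(t) := |1-f_{i+1}t|+|t|(1-|f_{i+1}|),
\]
and reduce the statement to a pointwise condition on each coordinate $z_i$. My first step is to determine, for each $i \geq 1$, the set of $t \in \R$ satisfying $\varphi_i(t)\leq 1$. A case split on the sign and magnitude of $f_{i+1}t$ gives $\varphi_i(t) = 1+|t|$ when $f_{i+1}t \leq 0$, $\varphi_i(t) = 1+|t|(1-2|f_{i+1}|)$ when $0 < f_{i+1}t \leq 1$, and $\varphi_i(t) = |t|-1$ when $f_{i+1}t > 1$. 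Combining the three regimes, $\varphi_i(t)\leq 1$ if and only if either $t = 0$, or $|f_{i+1}|\geq \frac{1}{2}$, $f_{i+1}t\geq 0$, and $|t|\leq 2$.

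Once this pointwise characterization is in hand, the existence claim follows quickly. For sufficiency, if $|f_{j_0}|\geq \frac{1}{2}$ for some $j_0 \geq 2$, I would take $z$ with $z_{j_0-1} = 1/f_{j_0}$ and $z_i = 0$ for every other $i \geq 1$; then $z \in c_0 \subset c$, $z_0 = \lim z_i = 0$, and $f(z) = f_{j_0}z_{j_0-1} = 1$, while every $\varphi_i(z_i)$ is at most $1$, so $\|P_z\| = 1$. For necessity, assume $\|P_z\| = 1$; the characterization forces $z_i = 0$ whenever $|f_{i+1}| < \frac{1}{2}$, so if no $j \geq 2$ satisfied $|f_j|\geq \frac{1}{2}$, all coordinates $z_i$ with $i\geq 1$ would vanish, yielding $z_0 = \lim z_i = 0$ and hence $f(z) = 0$, contradicting $f(z) = 1$.

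For the uniqueness part, assume first that exactly one index $j_0 \geq 2$ satisfies $|f_{j_0}|\geq \frac{1}{2}$. A norm-$1$ projection $P_z$ must have $z_i = 0$ for all $i \geq 1$ with $i \neq j_0 - 1$, hence $z_0 = 0$, and then $z_{j_0-1} = 1/f_{j_0}$ is forced by $f(z) = 1$, proving uniqueness. Conversely, if two distinct indices $j_1, j_2 \geq 2$ both satisfy $|f_{j_1}|, |f_{j_2}|\geq \frac{1}{2}$, then $\|f\|_{\ell_{1}} = 1$ implies $|f_{j_1}|=|f_{j_2}|=\frac{1}{2}$ with all other coordinates of $f$ zero; the two sequences $z^{(k)}$ defined by $z^{(k)}_{j_k-1} = 1/f_{j_k}$ and zero elsewhere then produce two distinct norm-$1$ projections.

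The main technical step is the pointwise analysis: one must balance the three sign-and-magnitude regimes so that the critical threshold $|f_{i+1}| = \frac{1}{2}$ emerges cleanly and the high-product case $f_{i+1}t > 1$ does not introduce extra solutions that could spoil uniqueness. After this the two parts of the proposition reduce to the global observation that $z \equiv 0$ is incompatible with the constraint $f(z) = 1$.
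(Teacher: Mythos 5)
Your proof is correct and follows essentially the same route as the paper: the norm formula of Lemma \ref{lem:norm projections}, the pointwise reduction forcing $z_{i}=0$ whenever $\left|f_{i+1}\right|<\frac{1}{2}$, the incompatibility of $z\equiv0$ with $f(z)=1$, and the explicit choice $z_{j_{0}-1}=1/f_{j_{0}}$. Your three-regime analysis of $\varphi_{i}$ is in fact slightly more careful than the paper's, which tacitly writes $\left|1-f_{i+1}z_{i}\right|=1-f_{i+1}z_{i}$; your explicit treatment of the case $f_{i+1}t>1$ closes that small gap without changing the argument.
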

\begin{proof} By Lemma \ref{lem:norm projections} $W_{f}$ is the
rank of a norm-$1$ projection if and only if there exists $z\in c$
such that: 
\begin{eqnarray}
\left|1-f_{i+1}z_{i}\right|+\left|z_{i}\right|\left(1-\left|f_{i+1}\right|\right) & \leq & 1\quad\forall\, i\ge1\label{eq:1 Theorem norm1projection}
\end{eqnarray}

\begin{equation}
\sum_{j=0}^{+\infty}f_{j+1}z_{j}=1.\label{eq:2 Theoremnorm1projection}
\end{equation}
Inequality (\ref{eq:1 Theorem norm1projection}) implies that ${\rm sgn}(f_{i+1})={\rm sgn}(z_{i})$
for every $i\geq1$. Then (\ref{eq:1 Theorem norm1projection}) becomes
\[
1-f_{i+1}z_{i}+\left|z_{i}\right|-z_{i}f_{i+1}\leq1\quad\forall\, i\ge1
\]
and hence 
\[
\left|z_{i}\right|\left(1-2\left|f_{i+1}\right|\right)\leq0\quad\forall\, i\ge1.
\]
Therefore for every $i$ such that $\left|f_{i+1}\right|<\frac{1}{2}$
it holds $z_{i}=0.$ By equation (\ref{eq:2 Theoremnorm1projection})
we conclude that there exists at least one index $j_{0}\geq2$ such
that $\left|f_{j_{0}}\right|\geq\frac{1}{2}$.

Now let us consider an element $z^{0}\in c$ such that 
\begin{equation}
z_{j_{0}-1}^{0}=\frac{1}{f_{j_{0}}},\quad z_{j}^{0}=0\quad\forall\, j\neq j_{0}-1.\label{eq:3 Theoremnorm1projection}
\end{equation}
It is easy to see that $z^{0}$ satisfies equations (\ref{eq:1 Theorem norm1projection})
and (\ref{eq:2 Theoremnorm1projection}), $\left\Vert P_{z^{0}}\right\Vert =1$.
Finally, if there is a unique index $j_{0}$ such that $\left|f_{j_{0}}\right|\geq\frac{1}{2}$,
we remark that a unique projection $P_{z^{0}}$ exists (where $z^{0}$
is defined by (\ref{eq:3 Theoremnorm1projection})). If there are
two indexes $j_{1}$ and $j_{2}$ such that $\left|f_{j_{1}}\right|=\left|f_{j_{2}}\right|=\frac{1}{2}$
then both the projections $P_{z^{1}}$ and $P_{z^{2}}$ (where $z^{1}$
and $z^{2}$ are defined by (\ref{eq:3 Theoremnorm1projection}))
have norm $1$. \end{proof} Lemma \ref{lem:norm projections} allows
us to give an explicit formula to compute the projection constant
of the hyperplane $W_{f}$. \begin{prop} \label{prop:projection constant}Let
$f\in\ell_{1}$ be such that $\left\Vert f\right\Vert _{\ell_{1}}=1$
and $\left|f_{j}\right|<\frac{1}{2}$ for every $j\geq2$. Then 
\[
\inf_{z\in f^{-1}(1)}\left\Vert P_{z}\right\Vert =1+\left(\left|f_{1}\right|+\sum_{j=1}^{+\infty}\frac{\left|f_{j+1}\right|}{1-2\left|f_{j+1}\right|}\right)^{-1}.
\]
\end{prop}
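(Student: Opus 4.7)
The plan is to establish both inequalities $\inf_z \|P_z\| \geq 1 + 1/S$ and $\inf_z \|P_z\| \leq 1 + 1/S$, where $S$ denotes the expression $|f_1| + \sum_{j\geq 1}|f_{j+1}|/(1-2|f_{j+1}|)$ inside the parentheses on the right-hand side. Both directions rest entirely on the explicit norm formula \eqref{eq:norm of P} of Lemma~\ref{lem:norm projections}.

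For the lower bound I would fix $z \in f^{-1}(1)$, set $M = \|P_z\|$, and analyze, for each $i \geq 1$, the pointwise estimate $|1-f_{i+1}z_i| + |z_i|(1-|f_{i+1}|) \leq M$. Choosing the favorable sign of $z_i$ (same as $f_{i+1}$) and working in the ``good'' regime $|f_{i+1}z_i|\leq 1$, this collapses to $|z_i|(1-2|f_{i+1}|) \leq M-1$, i.e., $|z_i| \leq (M-1)/(1-2|f_{i+1}|)$. Letting $i\to\infty$, where $f_{i+1}\to 0$ and $z_i \to z_0 := \lim z_i$, also yields $|z_0| \leq M-1$. Substituting these bounds into the normalization $f_1 z_0 + \sum_{i\geq 1} f_{i+1}z_i = 1$ gives $1 \leq (M-1)S$, whence $M \geq 1 + 1/S$.

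For the upper bound I would construct an approximating sequence of projections. For each $n$ set $z_i^{(n)} = \mathrm{sgn}(f_{i+1})/[S_n(1-2|f_{i+1}|)]$ for $1 \leq i \leq n$ and $z_i^{(n)} = \mathrm{sgn}(f_1)/S_n$ for $i > n$ (with the convention $\mathrm{sgn}(0) := 0$), and tune the constant $S_n$ so that $f(z^{(n)}) = 1$. A direct computation gives $S_n = |f_1| + \sum_{i=1}^n |f_{i+1}|/(1-2|f_{i+1}|) + \mathrm{sgn}(f_1)\sum_{i>n}f_{i+1}$, which converges to $S$ since the last term is $O(\sum_{i>n}|f_{i+1}|) \to 0$. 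Applying \eqref{eq:norm of P}: for each $1\leq i\leq n$ the sign choice makes the $i$-th term equal to exactly $1+1/S_n$, while for $i>n$ the triangle inequality immediately bounds the $i$-th term by $1+1/S_n$; hence $\|P_{z^{(n)}}\| = 1+1/S_n \to 1+1/S$.

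The main technical obstacle is the regime analysis, i.e., ensuring that the bound from \eqref{eq:norm of P} at each coordinate collapses to the clean form $1 + |z_i|(1-2|f_{i+1}|)$. The key auxiliary fact is the strict inequality $S > |f_{j+1}|/(1-2|f_{j+1}|)$ for every $j\geq 1$, which rules out the ``bad'' regime $|f_{j+1}|(M+1) > 1$ for $M \leq 1 + 1/S$ and also guarantees $S_n \geq |f_{i+1}|/(1-2|f_{i+1}|)$ for $1\leq i\leq n$ and $n$ large (this is precisely what forces $f_{i+1}z_i^{(n)} \in [0,1]$). This strict inequality follows from $\|f\|_{\ell_1}=1$ combined with the standing hypothesis $|f_j|<\tfrac{1}{2}$ for all $j\geq 2$, which forces at least two nonzero terms among the $f_j$ and prevents the sum defining $S$ from being saturated by a single index.
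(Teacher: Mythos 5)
Your proposal is correct and follows essentially the same route as the paper: the upper bound uses the identical approximating sequence $z^{(n)}$ (the paper's $z^N$ with $\lambda_N=1/S_n$), and the lower bound is the same coordinate-wise estimate $|z_i|\left(1-2|f_{i+1}|\right)\le\|P_z\|-1$ (which in fact holds unconditionally, since $|1-f_{i+1}z_i|\ge 1-|f_{i+1}z_i|$ regardless of sign or regime) fed into the constraint $f(z)=1$; the paper merely phrases this as a contradiction and treats $|f_1|=1$ separately, which your direct version handles uniformly. One minor quibble: when $|f_1|=1$ the functional has a single nonzero coordinate, so your justification of $S>|f_{j+1}|/(1-2|f_{j+1}|)$ via ``at least two nonzero terms'' needs rewording, although the inequality itself is immediate there because the right-hand side vanishes for every $j\ge1$.
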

 \begin{proof} Let us consider the quantity 
\[
\alpha_{N}=\left|f_{1}\right|+\sum_{j=1}^{N-1}\frac{\left|f_{j+1}\right|}{1-2\left|f_{j+1}\right|}+{\rm sgn}(f_{1})\sum_{j=N}^{+\infty}f_{j+1}.
\]
We first remark that there exists $N_{0}\in\mathbb{N}$ such that
for every $N\geq N_{0}$ it holds $\alpha_{N}>0$ and 
\begin{equation}
\alpha_{N}\geq\frac{\left|f_{k+1}\right|}{1-2\left|f_{k+1}\right|}\label{eq:initial prop projection constant}
\end{equation}
for every $1\leq k\leq N-1$.

Let us consider the sequence $\left\{ z^{N}\right\} _{N\geq N_{0}}\subset c$
defined by 
\[
z^{N}=\lambda_{N}\left(\underbrace{\frac{{\rm sgn}(f_{2})}{1-2\left|f_{2}\right|},...,\frac{{\rm sgn}(f_{N})}{1-2\left|f_{N}\right|}}_{N-1},{\rm sgn}(f_{1}),{\rm sgn}(f_{1}),...\right),
\]
where $\lambda_{N}$ is a positive real number such that $f(z^{N})=1$.
Therefore, it is $\lambda_{N}=\alpha_{N}^{-1}.$

Now, it holds 
\begin{equation}
\left\Vert P_{z^{N}}\right\Vert =\sup_{i\geq1}\left\{ \left|1-f_{i+1}z_{i}^{N}\right|+\left|z_{i}^{N}\right|\left(1-\left|f_{i+1}\right|\right)\right\} \leq1+\lambda_{N}\label{eq:1proposition norm projection >1}
\end{equation}
for every $N>N_{0}$. Indeed, by inequality (\ref{eq:initial prop projection constant})
we obtain that $1-\lambda_{N}\frac{\left|f_{i+1}\right|}{1-2\left|f_{i+1}\right|}\geq0$,
and hence, for $1\leq i\leq N-1$, we have 
\[
1-\lambda_{N}\frac{\left|f_{i+1}\right|}{1-2\left|f_{i+1}\right|}+\lambda_{N}\left(\frac{1-\left|f_{i+1}\right|}{1-2\left|f_{i+1}\right|}\right)=1+\lambda_{N}.
\]
Moreover, for $i\geq N$ 
\[
\left|1-\lambda_{N}f_{i+1}{\rm sgn}(f_{1})\right|+\lambda_{N}\left(1-\left|f_{i+1}\right|\right)\leq1+\lambda_{N}\left|f_{i+1}\right|+\lambda_{N}-\lambda_{N}\left|f_{i+1}\right|\leq1+\lambda_{N}.
\]
By (\ref{eq:1proposition norm projection >1}) we have that $\inf_{z\in f^{-1}(1)}\left\Vert P_{z}\right\Vert \leq1+\lambda$,
where 
\begin{equation}
\lambda=\lim_{N}\lambda_{N}=\left(\left|f_{1}\right|+\sum_{j=1}^{+\infty}\frac{\left|f_{j+1}\right|}{1-2\left|f_{j+1}\right|}\right)^{-1}.\label{eq:definition of lambda}
\end{equation}
We will finish the proof by showing that $\inf_{z\in f^{-1}(1)}\left\Vert P_{z}\right\Vert =1+\lambda$.
Let us consider two different cases.

First, let us suppose that $\left|f_{1}\right|=1$ and hence $\lambda=1$.
In this case it is well known that $\inf_{z\in f^{-1}(1)}\left\Vert P_{z}\right\Vert =2$
(see, e.g., \cite{FHHM}).

Finally, let $\left|f_{1}\right|<1$. By contradiction, let us suppose
that there exists $\hat{z}\in f^{-1}(1)$ such that 
\[
\left\Vert P_{\hat{z}}\right\Vert =\sup_{i\geq1}\left\{ \left|1-f_{i+1}\hat{z}_{i}\right|+\left|\hat{z}_{i}\right|\left(1-\left|f_{i+1}\right|\right)\right\} <1+\lambda
\]
hence 
\[
\left|1-f_{i+1}\hat{z}_{i}\right|+\left|\hat{z}_{i}\right|\left(1-\left|f_{i+1}\right|\right)<1+\lambda
\]
for every $i\geq1$ and then 
\[
1-\left|f_{i+1}\right|\left|\hat{z}_{i}\right|+\left|\hat{z}_{i}\right|-\left|\hat{z}_{i}\right|\left|f_{i+1}\right|<1+\lambda.
\]
Therefore, for every $i\geq1$, it holds that 
\begin{equation}
\left(1-2\left|f_{i+1}\right|\right)\left|\hat{z}_{i}\right|<\lambda.\label{eq:estimation norm 2}
\end{equation}
Moreover, the last relation gives that 
\begin{equation}
\left|\hat{z}_{0}\right|=\lim_{i}\left|\hat{z}_{i}\right|\leq\lim_{i}\frac{\lambda}{1-2\left|f_{i+1}\right|}=\lambda.\label{eq:estimation norm 3}
\end{equation}
Since there exists at least one index $\hat{j}\geq1$ such that $f_{\hat{j}+1}\neq0$, by using inequalities (\ref{eq:estimation norm 2}) and (\ref{eq:estimation norm 3})
and by recalling (\ref{eq:definition of lambda}), we conclude that
\[
\sum_{j=0}^{+\infty}f_{j+1}\hat{z}_{j}\leq\left|f_{1}\right|\left|\hat{z}_{0}\right|+\sum_{j=1}^{+\infty}\left|f_{j+1}\right|\left|\hat{z}_{j}\right|<\lambda\left(\left|f_{1}\right|+\sum_{j=1}^{+\infty}\frac{\left|f_{j+1}\right|}{1-2\left|f_{j+1}\right|}\right)=1.
\]
The last
inequality is a contradiction because it holds $\sum_{j=0}^{+\infty}f_{j+1}\hat{z}_{j}=1$.\end{proof}

\section{\label{sec:Isometries c and c_0}Isometries between the hyperplanes
of $c$ and the spaces $c$ and $c_{0}$}

In this section we show that the isometric structure of the hyperplanes
is completely described whenever the associated projection constant
assumes the extreme values. Indeed, we will prove that $W_{f}$ is
isometric to $c$ if and only if it is $1$-complemented, whereas
$W_{f}$ is isometric to $c_{0}$ if and only if its projection constant
is $2$. We begin to study the $1$-complemented hyperplanes of $c$.
The first step shows that a $1$-complemented hyperplane is isometric
to $c$. \begin{prop} \label{prop:isometry W c}If $W_{f}\subset c$
is $1$-complemented then $W_{f}$ is isometric to $c$.\end{prop}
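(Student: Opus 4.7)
The plan is to construct an explicit linear isometry $T\colon c\to W_f$. Proposition~\ref{thm:1complemented in c} furnishes an index $j_0\geq 2$ with $|f_{j_0}|\geq 1/2$; set $k_0:=j_0-1\geq 1$. The idea is to use an order-preserving bijection $\sigma\colon \mathbb{N}\to\mathbb{N}\setminus\{k_0\}$ to place the entries of an arbitrary $y\in c$ into all coordinates of $c$ except the $k_0$-th, and then to force the missing $k_0$-th coordinate by the defining equation $f(x)=0$ of $W_f$.

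Concretely, for $y=(y_i)_{i\geq 1}\in c$ with $y_0:=\lim_i y_i$, define $x=T(y)\in c$ by $x_{\sigma(i)}=y_i$ for every $i\geq 1$ and
$$x_{k_0}=-\frac{1}{f_{j_0}}\Bigl(f_1 y_0+\sum_{i\geq 1,\, i\neq k_0} f_{i+1}x_i\Bigr).$$
Only one coordinate of $x$ differs from a convergent sequence with limit $y_0$, so $x\in c$ with $\lim x=y_0$, and $f(x)=0$ by construction, hence $x\in W_f$. Linearity is obvious. Surjectivity is also immediate: for $x\in W_f$ the pre-image is $y_i:=x_{\sigma(i)}$, because the displayed formula for $x_{k_0}$ is then automatic from $f(x)=0$.

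The essential step, and the only one in which the hypothesis $|f_{j_0}|\geq 1/2$ is used, is to verify that $T$ preserves the sup-norm. The inequality $\|T(y)\|_\infty\geq\|y\|_\infty$ is free, since $x_{\sigma(i)}=y_i$. For the reverse inequality it suffices to bound $|x_{k_0}|$, and the triangle inequality together with $\|f\|_{\ell_1}=1$ yields
$$|x_{k_0}|\leq\frac{1}{|f_{j_0}|}\Bigl(|f_1|+\sum_{i\geq 1,\, i\neq k_0}|f_{i+1}|\Bigr)\|y\|_\infty=\frac{1-|f_{j_0}|}{|f_{j_0}|}\,\|y\|_\infty,$$
which is $\leq\|y\|_\infty$ precisely because $|f_{j_0}|\geq 1/2$.

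I expect the conceptual obstacle to lie in identifying the right map: the naive isometry that simply inserts a zero at position $k_0$ works only in the degenerate case $|f_{j_0}|=1$, and one needs to notice that the threshold $1/2$ appearing in Proposition~\ref{thm:1complemented in c} is exactly what makes the forced coordinate $x_{k_0}$ fit inside $\|y\|_\infty$. Once this is observed, the verification collapses to the three-line computation above.
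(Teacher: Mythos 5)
Your construction is exactly the paper's: the order-preserving bijection $\sigma$ amounts to shifting the input sequence to free up coordinate $j_0-1$, forcing that coordinate via $f(x)=0$, and bounding it by $\frac{1-|f_{j_0}|}{|f_{j_0}|}\,\|y\|_\infty\leq\|y\|_\infty$ using $|f_{j_0}|\geq\frac{1}{2}$. The proof is correct and essentially identical to the one in the paper.
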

\begin{proof} By Theorem \ref{thm:1complemented in c}, there exists
$j_{0}\geq2$ such that $\left|f_{j_{0}}\right|\geq\frac{1}{2}$.
Now, let us consider $T:c\rightarrow W_{f}$ defined by 
\[
T(x_{1},x_{2},...)=(x_{1},...,x_{j_{0}-2},\underbrace{\alpha}_{j_{0}-1},x_{j_{0}-1},x_{j_{0}},...)
\]
where 
\[
\alpha=-\frac{1}{f_{j_{0}}}\left(\sum_{j=0}^{j_{0}-2}f_{j+1}x_{j}+\sum_{j=j_{0}}^{+\infty}f_{j+1}x_{j-1}\right).
\]
The inverse of $T$ is $T^{-1}:W_{f}\rightarrow c$ acts on $y=(y_{1},y_{2},...)\in W_{f}$
by deleting the $\left(j_{0}-1\right)$-th component of $y$. Moreover,
if $x\in c$, then 
\[
\left|\alpha\right|\leq\frac{1}{\left|f_{j_{0}}\right|}\left(\sum_{j=0}^{j_{0}-2}\left|f_{j+1}\right|\left|x_{j}\right|+\sum_{j=j_{0}}^{+\infty}\left|f_{j+1}\right|\left|x_{j-1}\right|\right)
\]
\[
\leq\frac{1}{\left|f_{j_{0}}\right|}\left(\sum_{j=0}^{j_{0}-2}\left|f_{j+1}\right|+\sum_{j=j_{0}}^{+\infty}\left|f_{j+1}\right|\right)\left\Vert x \right\Vert=\frac{1}{\left|f_{j_{0}}\right|}\left(\sum_{\begin{array}{c}
j=0\\
j\neq j_{0}-1
\end{array}}^{+\infty}\left|f_{j+1}\right|\right)\left\Vert x \right\Vert=
\]
\[
=\frac{1}{\left|f_{j_{0}}\right|}\left(1-\left|f_{j_{0}}\right|\right)\left\Vert x \right\Vert\leq\left\Vert x \right\Vert.
\]
Therefore $T$ is an isometry between $W_{f}$ and $c$. \end{proof}
In order to prove the reverse implication we need to investigate the
family of the isometries on $c$ with $1$-codimensional range. To
this aim, we adapt to our framework some results from \cite{Gutek1991}
(Theorem 2.1 and Lemma 2.2) about the isometries on the space of continuous
functions defined on a compact set. It is worth to remark that the
mentioned results in \cite{Gutek1991} do not refer to general isometries
but they consider only shift operators. Nevertheless, by considering
the proofs of these results, it is easy to see that they hold for
general isometries with $1$-codimensional range.

As it is well known the space $c$ can be seen as the space $\mathcal{C}\left(\mathbb{N}^{*}\right)$
of continuous function on $\mathbb{N}^{*}$, where $\mathbb{N}^{*}$
denotes the Alexandroff one-point compactification of the set of positive
integers. For the sake of convenience, we denote by $0$ the unique
limit point of $\mathbb{N}^{*}$. \begin{thm} \label{thm:Gutek}(Theorem
2.1 and Lemma 2.2 in \cite{Gutek1991}). Let $T:\mathcal{C}\left(\mathbb{N}^{*}\right)\rightarrow\mathcal{C}\left(\mathbb{N}^{*}\right)$
be an isometry with $1$-codimensional range. Then there exist a closed
subset $M$ of $\mathbb{N}^{*}$, a continuous and surjective function
$\varphi:M\rightarrow\mathbb{N}^{*}$ where $\varphi^{-1}(n)$ has
at most two elements for each $n\in\mathbb{N}^{*}$and a sequence
$\left\{ \varepsilon_{n}\right\} _{n\in\mathbb{N}^{*}}$ where $\left|\varepsilon_{n}\right|=1$
such that 
\begin{equation}
\left(Tx\right)_{n}=\varepsilon_{n}x_{\varphi(n)}\quad{\rm for}\,{\rm every}\, n\in M.\label{eq:Gutek1}
\end{equation}
Moreover, only one of the two following alternatives holds: 
\begin{enumerate}
\item $M=\mathbb{N}^{*}\setminus\left\{ \bar{n}\right\} $ where $\bar{n}$
is a positive integer. In addition, in this case $\varphi$ is also
injective; 
\item $M=\mathbb{N}^{*}$and, if there exists $n'\in\mathbb{N}^{*}$ such
that the set $\varphi^{-1}(n')$ has two elements, then $\varphi^{-1}(n)$
is a singleton for every $n\in\mathbb{N}^{*}\setminus\left\{ n'\right\} $. 
\end{enumerate}
\end{thm}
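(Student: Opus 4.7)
The plan is to adapt the argument from \cite{Gutek1991}: the proofs there for shift operators do not actually use the specific form of a shift, only that the operator is a linear isometric embedding whose image has codimension one. My proposal mirrors the three assertions of the theorem --- the pointwise representation, the continuity/surjectivity of $\varphi$, and the two alternatives --- and verifies each using only the isometry assumption and the codimension-one hypothesis.

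First I would establish the pointwise representation by the classical Holsztynski/Banach--Stone argument for isometric embeddings of $\mathcal{C}(K)$ into itself. Since $T$ is an isometry, $T^*$ is a weak$^*$--weak$^*$ continuous quotient of $\ell_1\simeq\mathcal{C}(\mathbb{N}^*)^*$ (identified via the signed point masses $\delta_n$) with one-dimensional kernel; in particular $\|T^*\delta_n\|\leq 1$ for each $n$. Set
\[
M=\{n\in\mathbb{N}^*:T^*\delta_n\in\mathrm{ext}\,B_{\ell_1}\}.
\]
For $n\in M$, $T^*\delta_n=\varepsilon_n\delta_{\varphi(n)}$ with $|\varepsilon_n|=1$ and $\varphi(n)\in\mathbb{N}^*$, and evaluating $\langle x,T^*\delta_n\rangle=(Tx)_n$ yields formula (\ref{eq:Gutek1}).

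Next I would exploit the topology of $\mathbb{N}^*$, whose only non-isolated point is $0$, so every nonempty closed subset is either finite or contains $0$. Continuity of $Tx\in\mathcal{C}(\mathbb{N}^*)$ forces $\varepsilon_{n_k}x_{\varphi(n_k)}\to(Tx)_0$ for every sequence $n_k\to\infty$ inside $M$ and every $x\in c$; this pins down both the continuity of $\varphi$ at $0$ and the inclusion $0\in M$ whenever $M$ is infinite, hence $M$ is closed. Surjectivity of $\varphi\colon M\to\mathbb{N}^*$ then follows from the equality $\|Tx\|=\sup_{n\in M}|x_{\varphi(n)}|=\|x\|_\infty$ valid for all $x\in c$, which forces $\varphi(M)$ to be $\|\cdot\|_\infty$-norming on $c$ (hence dense) and so, being closed in the compact space $\mathbb{N}^*$, to be all of $\mathbb{N}^*$.

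Finally I would invoke the codimension-one hypothesis to prune the structure. The range of $T$ inside $\mathcal{C}(\mathbb{N}^*)$ is cut out by two families of independent linear conditions: one condition for each $n\in\mathbb{N}^*\setminus M$ (the formula says nothing there, so continuity of $y\in\mathrm{Im}\,T$ imposes a single linear constraint on $y_n$), and one ``coupling'' condition $y_{m_1}/\varepsilon_{m_1}=y_{m_2}/\varepsilon_{m_2}$ for each extra preimage inside a fibre $\varphi^{-1}(k)$. Counting gives
\[
\mathrm{codim}\,T\bigl(\mathcal{C}(\mathbb{N}^*)\bigr)=|\mathbb{N}^*\setminus M|+\sum_{k\in\mathbb{N}^*}\bigl(|\varphi^{-1}(k)|-1\bigr)=1,
\]
so exactly one constraint appears. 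Hence either $|\mathbb{N}^*\setminus M|=1$ with $\varphi$ injective, and the missing point must be a positive integer (otherwise $M$ is finite and cannot surject onto the infinite space $\mathbb{N}^*$), producing alternative (1); or $M=\mathbb{N}^*$ and exactly one fibre of $\varphi$ has two elements while all others are singletons, producing alternative (2). The delicate step is the first one: ensuring that $T^*\delta_n$ lands on an extreme point of $B_{\ell_1}$ for enough indices $n$ to force $\varphi$ to surject, before the codimension count of the last step becomes legitimate. Once the Holsztynski-type representation and the closedness/continuity of $M$ and $\varphi$ are in hand, the remaining case split is essentially finite-dimensional bookkeeping.
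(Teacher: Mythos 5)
The paper does not actually prove this statement: it is imported from Theorem 2.1 and Lemma 2.2 of \cite{Gutek1991}, with only the remark that the proofs there, although phrased for shift operators, use nothing beyond the hypothesis that $T$ is an isometry whose range has codimension one. So there is no in-paper argument to compare yours with; what you have written is an independent reconstruction, and it has a genuine gap at its central step. You define $M=\{n:T^*\delta_n\in\mathrm{ext}\,B_{\ell_1}\}$ and then assert the identity $\|Tx\|=\sup_{n\in M}|x_{\varphi(n)}|$ in order to conclude that $\varphi(M)$ is norming and hence that $\varphi$ is surjective. But for $n\notin M$ you only know $|(Tx)_n|\le\|x\|$, so the supremum defining $\|Tx\|$ could a priori be attained entirely outside $M$; the assertion that the restriction of $Tx$ to $M$ is norming is equivalent (given that $M$ is closed and $\varphi$ continuous) to the surjectivity you are trying to prove, so the argument is circular. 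Closing it requires the peaking-function argument at the heart of Holszty\'nski's theorem: for each $k\in\mathbb{N}^*$ one tests $T$ against functions peaking at $k$ and uses $\|Tx\|=\|x\|$ to manufacture an index $n$ with $T^*\delta_n=\pm\delta_k$. You flag this yourself as ``the delicate step,'' but flagging it is not supplying it.

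The concluding codimension count is also asserted rather than proved. You need (i) that the range of $T$ is exactly, and not merely contained in, the subspace of $\mathcal{C}(\mathbb{N}^*)$ cut out by the coupling conditions on the fibres of $\varphi$ together with one condition for each $n\notin M$, and (ii) that these conditions are linearly independent, so that the codimension really equals $|\mathbb{N}^*\setminus M|+\sum_{k}\left(|\varphi^{-1}(k)|-1\right)$; neither direction is automatic. Your topological observations about $\mathbb{N}^*$ are correct and do the right work (a closed subset surjecting onto $\mathbb{N}^*$ must be infinite, hence contains the limit point $0$, hence the deleted point in alternative (1) is a positive integer), but your parenthetical justification --- ``otherwise $M$ is finite'' --- is off: $\mathbb{N}^*\setminus\{0\}$ is infinite, and what rules it out is that it is not closed. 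In short, the skeleton is the right one (Holszty\'nski-type representation via extreme points of $B_{\ell_1}$, the topology of the one-point compactification, codimension bookkeeping), but the two load-bearing steps are missing, whereas the paper's intended route is simply to rerun the proofs of \cite{Gutek1991} and observe that they never use the shift structure.
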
 Now, by means of the previous theorem, we will show that
$W_{f}$ is $1$-complemented whenever it is isometric to $c$. \begin{prop} If $W_{f}\subset c$
is isometric to $c$ then $W_{f}$ is $1$-complemented.\end{prop}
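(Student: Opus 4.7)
The plan is to apply Gutek's theorem (Theorem~\ref{thm:Gutek}) to an isometry $T\colon c\to c$ whose range equals $W_{f}$, and in each of the two alternatives of that theorem to extract an index $j_{0}\geq 2$ with $|f_{j_{0}}|\geq\frac{1}{2}$; the conclusion will then follow from Proposition~\ref{thm:1complemented in c}.

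In the first alternative, $M=\mathbb{N}^{*}\setminus\{\bar{n}\}$ for some positive integer $\bar{n}$ and $\varphi\colon M\to\mathbb{N}^{*}$ is a continuous bijection. Since $M$ is compact, $\varphi$ is in fact a homeomorphism, and continuity at the limit point forces $\varphi(0)=0$. Testing $T$ on the constant sequence $1$ shows that $\varepsilon_{n}\to\varepsilon_{0}$ as $n\to\infty$ in $M$, whence $\lim_{n}(Tx)_{n}=\varepsilon_{0}x_{0}$ for every $x\in c$. The only coordinate of $Tx$ not supplied by Gutek's formula is $(Tx)_{\bar{n}}$; write it as $L(x)$ for some $L\in c^{*}$. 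Because $\varphi|_{M}$ is a bijection onto $\mathbb{N}^{*}$, the remaining coordinates already contribute $\|x\|$ to the supremum, so $\|Tx\|=\max\{\|x\|,|L(x)|\}$ and the isometry condition reduces to $\|L\|_{c^{*}}\leq 1$. I would then determine $L$ by imposing $f(Tx)=0$: this identity shows that $f_{\bar{n}+1}\neq 0$ (otherwise all coefficients of the resulting linear relation in $x$ must vanish, forcing $f\equiv 0$) and yields
\[
L(x)=-\frac{1}{f_{\bar{n}+1}}\Bigl(f_{1}\varepsilon_{0}x_{0}+\sum_{n\geq 1,\,n\neq\bar{n}}f_{n+1}\varepsilon_{n}x_{\varphi(n)}\Bigr).
\]
Re-indexing via $\varphi^{-1}$ and evaluating $\|L\|_{c^{*}}$ as the $\ell_{1}$-norm of its representing sequence gives $\|L\|_{c^{*}}=(1-|f_{\bar{n}+1}|)/|f_{\bar{n}+1}|$, so $\|L\|\leq 1$ forces $|f_{\bar{n}+1}|\geq\frac{1}{2}$; the required index is $j_{0}:=\bar{n}+1\geq 2$.

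In the second alternative, $M=\mathbb{N}^{*}$, so $(Tx)_{n}=\varepsilon_{n}x_{\varphi(n)}$ for every $n\in\mathbb{N}^{*}$, and the same continuity argument yields $\varphi(0)=0$. If $\varphi^{-1}(n')=\{p,q\}$ is the unique double preimage, then the pair of identities $y_{p}=\varepsilon_{p}x_{n'}$ and $y_{q}=\varepsilon_{q}x_{n'}$ shows that the range of $T$ equals the hyperplane $\{y\in c : \varepsilon_{q}y_{p}-\varepsilon_{p}y_{q}=0\}$. This linear constraint must agree, up to a non-zero scalar, with $f(y)=0$; after normalising to $\|f\|_{\ell_{1}}=1$, the functional $f$ is supported precisely on the two coordinates $p+1$ and $q+1$ with $|f_{p+1}|=|f_{q+1}|=\frac{1}{2}$. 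Since $p\neq q$, at least one of $p,q$ is a positive integer, so at least one of $p+1,q+1$ is $\geq 2$.

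The main obstacle lies in the first alternative: one must correctly identify $\lim_{n}(Tx)_{n}$ (which rests on the eventual constancy of $(\varepsilon_{n})_{n\in M}$), rule out the degenerate possibility $f_{\bar{n}+1}=0$ before inverting this scalar, and compute $\|L\|_{c^{*}}$ by reading off its representing element of $\ell_{1}$. Once these steps are carried out, the reduction to Proposition~\ref{thm:1complemented in c} is immediate.
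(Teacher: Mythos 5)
Your strategy is the same as the paper's: apply Theorem \ref{thm:Gutek} to an isometry $T:c\to c$ with range $W_{f}$, extract in each alternative an index $j_{0}\geq 2$ with $\left|f_{j_{0}}\right|\geq\frac{1}{2}$, and conclude via Proposition \ref{thm:1complemented in c}. Within the two alternatives your computations are somewhat cleaner than the paper's: in alternative (1) the paper obtains $\left|f_{2}\right|\geq\frac{1}{2}$ by evaluating $f(Tx_{N})=0$ on a sequence of test vectors and passing to the limit, whereas you package the missing coordinate as a functional $L\in c^{*}$ and read off $\left\Vert L\right\Vert _{c^{*}}=\left(1-\left|f_{\bar{n}+1}\right|\right)/\left|f_{\bar{n}+1}\right|$ from its representing $\ell_{1}$-element; in alternative (2) with a double preimage the paper again uses test vectors to derive the two equations forcing $\left|f_{p+1}\right|=\left|f_{q+1}\right|=\frac{1}{2}$, while you identify $W_{f}$ with the two-coordinate hyperplane $\left\{ y:\varepsilon_{q}y_{p}=\varepsilon_{p}y_{q}\right\}$. (One small point there: the identities $y_{p}=\varepsilon_{p}x_{n'}$, $y_{q}=\varepsilon_{q}x_{n'}$ only give an inclusion of the range into that hyperplane; equality then follows because both are closed subspaces of codimension one, and this should be said.)

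There is one sub-case you do not address: in alternative (2) of Theorem \ref{thm:Gutek} the map $\varphi$ may be injective on all of $\mathbb{N}^{*}$ --- the theorem only asserts that \emph{if} some fibre is a pair then all other fibres are singletons, not that a double fibre exists. The paper rules this case out explicitly (it leads to the contradiction $\sum_{j}\left|f_{j}\right|=0$). In your framework the quickest way to dispose of it is to note that a continuous bijection $\varphi$ of the compact space $\mathbb{N}^{*}$ is a homeomorphism with $\varphi(0)=0$, and since $(\varepsilon_{n})$ is eventually constant the formula $\left(Tx\right)_{n}=\varepsilon_{n}x_{\varphi(n)}$ then defines a surjection onto $c$, contradicting the fact that the range $W_{f}$ has codimension one. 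With that case added the argument is complete.
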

\begin{proof} By recalling Proposition \ref{thm:1complemented in c},
in order to prove the theorem is sufficient to show that there exists
$j_{0}\geq2$ such that $\left|f_{j_{0}}\right|\geq\frac{1}{2}$ whenever
there exists an isometry $T:c\rightarrow W_{f}$.

First of all, by the continuity of $\varphi$ it is easy to see that
\[
\varphi(0)=0.
\]
Now, let us consider case (1) of Theorem \ref{thm:Gutek} and let us
suppose that $M=\mathbb{N}^{*}\setminus\left\{ 1\right\} $ without
loss of generality. Therefore equation (\ref{eq:Gutek1}) is true
for every $n\geq2$. Hence, all the components of $Tx$ are known
except $\left(Tx\right)_{1}=z$. Since $Tx\in W_{f}$ for every $x\in c$,
then it holds 
\begin{equation}
f_{1}\varepsilon_{0}x_{0}+f_{2}z+\sum_{j=2}^{+\infty}f_{j+1}\varepsilon_{j}x_{\varphi(j)}=0.\label{eq:Gutek2}
\end{equation}
By the injectivity of $\varphi$, we can choose $x_{N}\in c$ such
that 
\[
\varepsilon_{n}\left(x_{N}\right)_{\varphi(n)}={\rm sgn}f_{n+1}
\]
for every $2\leq n\leq N$. All the other components of $x_{N}$ are
equal to a value $x_{0}$ given by 
\[
\varepsilon_{0}x_{0}={\rm sgn}f_{1}.
\]
By (\ref{eq:Gutek2}), we have 
\[
\left|f_{1}\right|+f_{2}z_{N}+\left|f_{3}\right|+...+\left|f_{N+1}\right|+x_{0}\sum_{j=N+1}^{+\infty}f_{j+1}\varepsilon_{j}=0,
\]
where $z_{N}=\left(Tx_{N}\right)_{1}$. Since $T$ is an isometry
$\left|z_{N}\right|\leq1$ and hence, up to a subsequence, we can
suppose that $z_{N}$ converges to $\hat{z}$. Therefore, as $N\rightarrow\infty$,
it holds 
\[
\left|f_{1}\right|+f_{2}\hat{z}+\sum_{j=2}^{+\infty}\left|f_{j+1}\right|=f_{2}\hat{z}+1-\left|f_{2}\right|=0
\]
and hence $\hat{z}=-\frac{1-\left|f_{2}\right|}{f_{2}}$. Since $\left|\hat{z}\right|\leq1$,
we conclude that $\left|f_{2}\right|\geq\frac{1}{2}$.

Now, let us study the case (2) of Theorem \ref{thm:Gutek} where $M=\mathbb{N}^{*}$.
We consider three different situations. 
\begin{itemize}
\item Let $\varphi^{-1}(n)$ be a pair for an element $n\in\mathbb{N}^{*}$,
$n\neq0$. Without loss of generality, we can suppose that the map
$\varphi:\mathbb{N}^{*}\rightarrow\mathbb{N}^{*}$ is one-to-one everywhere
except at the point $1$ where it holds 
\[
\varphi^{-1}(1)=\left\{ 1,2\right\} .
\]
Since $Tx\in W_{f}$ then we have 
\begin{equation}
f_{1}\varepsilon_{0}x_{0}+f_{2}\varepsilon_{1}x_{1}+f_{3}\varepsilon_{2}x_{1}+\sum_{j=3}^{+\infty}f_{j+1}\varepsilon_{j}x_{\varphi(j)}=0.\label{eq:Gutek3}
\end{equation}
By the injectivity of $\varphi$, we can choose $x_{N}\in c$ such
that 
\[
\varepsilon_{n}\left(x_{N}\right)_{\varphi(n)}={\rm sgn}f_{n+1}
\]
for every $3\leq n\leq N$. Moreover, the component $\left(x_{N}\right)_{1}$ is chosen
to satisfies $\varepsilon_{1}\left(x_{N}\right)_{1}={\rm sgn}f_{2}$.
All the other components of $x_{N}$ are equal to the value $x_{0}$
given by 
\[
\varepsilon_{0}x_{0}={\rm sgn}f_{1}.
\]
Since $Tx_{N}\in W_{f}$, by (\ref{eq:Gutek3}), we have 
\[
\left|f_{1}\right|+\left|f_{2}\right|+f_{3}\varepsilon_{1}\varepsilon_{2}{\rm sgn}f_{2}+...+\left|f_{N+1}\right|+x_{0}\sum_{j=N+1}^{+\infty}f_{j+1}\varepsilon_{j}=0.
\]
Therefore, as $N\rightarrow\infty$, it holds 
\[
\left|f_{1}\right|+\left|f_{2}\right|+f_{3}\varepsilon_{1}\varepsilon_{2}{\rm sgn}f_{2}+\sum_{j=3}^{+\infty}\left|f_{j+1}\right|=0
\]
and hence 
\begin{equation}
1-\left|f_{3}\right|+f_{3}\varepsilon_{1}\varepsilon_{2}{\rm sgn}f_{2}=0.\label{eq:Gutek4}
\end{equation}
Similarly, by choosing $\left(x_{N}\right)_{1}$ to satisfy $\varepsilon_{2}\left(x_{N}\right)_{1}={\rm sgn}f_{3}$, we obtain an analogous equation for $f_{2}$ 
\begin{equation}
1-\left|f_{2}\right|+f_{2}\varepsilon_{1}\varepsilon_{2}{\rm sgn}f_{3}=0.\label{eq:Gutek5}
\end{equation}
Equations (\ref{eq:Gutek4}) and (\ref{eq:Gutek5}) hold true simultaneously
only if $\left|f_{2}\right|=\left|f_{3}\right|=\frac{1}{2}.$ 
\item Let $\varphi^{-1}(0)$ be a pair. Without loss of generality we can
suppose that 
\[
\varphi^{-1}(0)=\left\{ 0,1\right\} .
\]
Analysis similar to that in the previous point shows that $\left|f_{1}\right|=\left|f_{2}\right|=\frac{1}{2}.$ 
\item Finally, let $\varphi$ be injective. The same argument applied above
yields the following contradiction 
\[
\sum_{j=1}^{+\infty}\left|f_{j}\right|=0.
\]

\end{itemize}
\end{proof} Now we study the hyperplanes of $c$ that are isometric
to $c_{0}$. First, we prove that hyperplanes of $c$ with projection
constant equal to $2$ is an isometric copy of $c_{0}$. \begin{prop}
\label{prop:W isometric to c_0}If $W_{f}$ is such that $\inf_{z}\left\Vert P_{z}\right\Vert =2$
(where $P_{z}:c\rightarrow W_{f}$ is a projection) then $W_{f}$
is isometric to $c_{0}$.\end{prop}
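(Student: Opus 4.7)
The plan is to combine the explicit formula for the projection constant in Proposition \ref{prop:projection constant} with the obstruction from Proposition \ref{thm:1complemented in c}, and show that $\inf_z \|P_z\| = 2$ forces $f$ to be concentrated in its first coordinate.

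First I would rule out that some coefficient $|f_j|$ with $j \geq 2$ is at least $\tfrac{1}{2}$, since by Proposition \ref{thm:1complemented in c} this would give $\inf_z \|P_z\|=1$, contradicting our hypothesis. Thus $|f_j| < \tfrac{1}{2}$ for every $j \geq 2$, and Proposition \ref{prop:projection constant} applies, yielding
\[
2 = \inf_{z \in f^{-1}(1)} \|P_z\| = 1 + \left(|f_1| + \sum_{j=1}^{+\infty} \frac{|f_{j+1}|}{1-2|f_{j+1}|}\right)^{-1}.
\]
Hence $|f_1| + \sum_{j=1}^{+\infty} \frac{|f_{j+1}|}{1-2|f_{j+1}|} = 1$.

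The crux of the argument is the elementary inequality $\frac{|f_{j+1}|}{1-2|f_{j+1}|} \geq |f_{j+1}|$, valid whenever $|f_{j+1}| < \tfrac{1}{2}$, with equality if and only if $f_{j+1}=0$. Combining this with $\|f\|_{\ell_1} = 1$ gives
\[
1 = |f_1| + \sum_{j=1}^{+\infty} \frac{|f_{j+1}|}{1-2|f_{j+1}|} \geq |f_1| + \sum_{j=1}^{+\infty} |f_{j+1}| = \|f\|_{\ell_1} = 1.
\]
Equality throughout forces $f_{j+1} = 0$ for every $j \geq 1$, so $f_j = 0$ for all $j \geq 2$ and $|f_1| = 1$.

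It then remains to identify $W_f$ with $c_0$. Using the duality convention recalled in the introduction, $f(x) = f_1 x_0 = f_1 \lim_i x_i$, so $W_f = \{x \in c : \lim_i x_i = 0\}$, which is precisely the subspace of vanishing sequences, isometric to $c_0$. I do not anticipate a serious obstacle here: the whole argument is driven by a tight inequality, and the main delicate point is simply to invoke the earlier propositions in the right order to exclude the $1$-complemented case before applying the projection constant formula.
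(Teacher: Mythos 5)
Your proposal is correct and follows essentially the same route as the paper: exclude the case $|f_j|\geq\frac{1}{2}$ for $j\geq2$ via Proposition \ref{thm:1complemented in c}, apply the projection constant formula of Proposition \ref{prop:projection constant}, and exploit the tight inequality $\frac{|f_{j+1}|}{1-2|f_{j+1}|}\geq|f_{j+1}|$ (the paper writes the same fact as $\sum_{j}\frac{2|f_{j+1}|^{2}}{1-2|f_{j+1}|}=0$) to conclude $f=\pm(1,0,0,\dots)$ and $W_f\simeq c_0$.
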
 \begin{proof} Let us recall
that, by Proposition \ref{prop:projection constant}, 
\[
\inf_{z}\left\Vert P_{z}\right\Vert =1+\left(\left|f_{1}\right|+\sum_{j=1}^{+\infty}\frac{\left|f_{j+1}\right|}{1-2\left|f_{j+1}\right|}\right)^{-1}.
\]
Then, since $\inf_{z}\left\Vert P_{z}\right\Vert =2$, we have 
\[
\left|f_{1}\right|+\sum_{j=1}^{+\infty}\frac{\left|f_{j+1}\right|}{1-2\left|f_{j+1}\right|}=1.
\]
Since $\left\Vert f\right\Vert _{1}=1$, it holds 
\[
\sum_{j=1}^{+\infty}\frac{\left|f_{j+1}\right|}{1-2\left|f_{j+1}\right|}=\sum_{j=1}^{+\infty}\left|f_{j+1}\right|
\]
and hence 
\[
\sum_{j=1}^{+\infty}\left(\frac{\left|f_{j+1}\right|}{1-2\left|f_{j+1}\right|}-\left|f_{j+1}\right|\right)=\sum_{j=1}^{+\infty}\left(\frac{2\left|f_{j+1}\right|^{2}}{1-2\left|f_{j+1}\right|}\right)=0.
\]
Since $\left|f_{j+1}\right|<\frac{1}{2}$ for every $j=1,2,3,...$
(otherwise $\inf_{z}\left\Vert P_{z}\right\Vert =1$ by Proposition
\ref{thm:1complemented in c}), the last equality holds if and only
if $f_{j+1}=0$ for every $j=1,2,...$. Therefore $f=\pm(1,0,0,...)$
and hence $W_{f}\simeq c_{0}$. \end{proof} Now we prove that there
exists a unique hyperplane of $c$ isometric to $c_{0}$. This assertion
follows directly from a simple lemma that can be stated in a more
general setting. \begin{lem} Let $V$ be a subspace of $\mathcal{C}(K)$
where $K$ is a compact metric space. If $V$ is isometric to $c_{0}$
then there exists $p\in K$ such that 
\[
V\subseteq\left\{ f\in\mathcal{C}(K):f(p)=0\right\} .
\]
\end{lem}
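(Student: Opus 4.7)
The plan is to transport the standard basis of $c_{0}$ to $V$ via the isometry and exploit the ``sign freedom'' of the $c_{0}$-norm together with compactness of $K$. Let $T:c_{0}\to V$ be an isometric isomorphism and set $v_{n}=T(e_{n})\in\mathcal{C}(K)$, so that $\left\Vert v_{n}\right\Vert _{\infty}=1$ for every $n$.

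The crux of the argument will be to establish the pointwise $\ell_{1}$-bound
\[
\sum_{n=1}^{\infty}\left|v_{n}(k)\right|\leq 1 \qquad \text{for every } k\in K.
\]
To prove it I would fix $k\in K$ and $N\in\N$ and observe that $\left\Vert \sum_{i=1}^{N}\varepsilon_{i}e_{i}\right\Vert _{c_{0}}=1$ for every choice of signs $\varepsilon_{i}\in\left\{ -1,+1\right\} $; applying $T$ gives $\left|\sum_{i=1}^{N}\varepsilon_{i}v_{i}(k)\right|\leq 1$, and specializing to $\varepsilon_{i}=\mathrm{sgn}\,v_{i}(k)$ yields $\sum_{i=1}^{N}\left|v_{i}(k)\right|\leq 1$. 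Letting $N\to\infty$ establishes the bound. This is the only non-routine step I foresee: once the pointwise $\ell_{1}$-control is in hand, the remaining arguments are standard compactness and continuity manipulations.

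Next, since $v_{n}$ is continuous on the compact space $K$, pick $k_{n}\in K$ with $\left|v_{n}(k_{n})\right|=1$. Applying the $\ell_{1}$-bound at the point $k_{n}$ forces $v_{m}(k_{n})=0$ for every $m\neq n$. Because $K$ is a compact metric space, a subsequence $\left\{ k_{n_{j}}\right\}$ converges to some $p\in K$. For any fixed $m$ one has $n_{j}\neq m$ for all sufficiently large $j$, whence $v_{m}(k_{n_{j}})=0$ eventually; continuity of $v_{m}$ then gives $v_{m}(p)=0$.

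Finally, every $v\in V$ can be written as $T(a)$ for some $a=(a_{n})\in c_{0}$, and since $T$ is linear and isometric the partial sums $\sum_{n=1}^{N}a_{n}v_{n}$ converge uniformly to $v$ on $K$. Evaluating at $p$ and using the previous step yields $v(p)=\sum_{n}a_{n}v_{n}(p)=0$, which is precisely the inclusion $V\subseteq\left\{ f\in\mathcal{C}(K):f(p)=0\right\} $ claimed by the lemma.
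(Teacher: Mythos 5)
Your proof is correct, and it takes a genuinely different, more elementary route than the paper. The paper dualizes: from an isometry $T:V\rightarrow c_{0}$ it passes to the isometry $T^{*}:\ell_{1}\rightarrow V^{*}$, observes that each $T^{*}(e_{n})$ is an extreme point of $B_{V^{*}}$ and therefore, by a lemma of Dunford--Schwartz on subspaces of $\mathcal{C}(K)$, equals $\pm x_{p_{n}}^{*}$ for some $p_{n}\in K$ (evaluation functionals); since $\left\{ e_{n}\right\} $ is weak$^{*}$-null, $\left|f(p_{n})\right|\rightarrow0$ for every $f\in V$, and compactness of $K$ then produces the point $p$. You never mention the dual: your sign trick $\left\Vert \sum_{i=1}^{N}\varepsilon_{i}e_{i}\right\Vert _{c_{0}}=1$ gives the pointwise bound $\sum_{n}\left|v_{n}(k)\right|\leq1$, and your peak points $k_{n}$ (where $\left|v_{n}(k_{n})\right|=1$ forces $v_{m}(k_{n})=0$ for $m\neq n$) play exactly the role of the paper's $p_{n}$: evaluation at $k_{n}$ recovers, up to sign, the $n$-th coordinate functional transported to $V$, so you have in effect re-derived by hand the extreme-point representation that the paper cites. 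What your argument buys is a self-contained, elementary proof; what the paper's buys is brevity and the conceptual point that only the weak$^{*}$-null basis of $\ell_{1}$ matters. Both proofs conclude with the same sequential-compactness extraction of $p$ followed by a continuity/density step. Two minor polishing remarks: fix a convention for $\varepsilon_{i}$ when $v_{i}(k)=0$ (say $\varepsilon_{i}=1$), and note that the uniform convergence of $\sum_{n=1}^{N}a_{n}v_{n}$ to $v$ follows simply because $\sum_{n=1}^{N}a_{n}e_{n}\rightarrow a$ in $c_{0}$ and $T$ is a bounded linear map.
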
 \begin{proof} Let $T:V\rightarrow c_{0}$ be an isometry.
Therefore $T^{*}:\ell_{1}\rightarrow V^{*}$ is also an isometry and
hence $T^{*}(e_{n})\in{\rm ext}B_{V^{*}}$ where $e_{n}$ is the $n$-th
element of the standard basis of $\ell_{1}$. By Lemma 6, p.441 in
\cite{DSbook}, we have 
\[
T^{*}(e_{n})=\pm x_{p_{n}}^{*}
\]
where $x_{p}^{*}$ is the evaluation functional. Since $\left\{ e_{n}\right\} $
is a weak$^{*}$-null sequence, $\left|x_{p_{n}}^{*}(f)\right|\rightarrow0$
for every $f\in V$. Now, by the compactness of $K$, there exists
an element $p\in K$ such that $f(p)=0$ for every $f\in V$. \end{proof}
If we take $\mathcal{C}(K)=\mathcal{C}(\mathbb{N}^{*})=c$ and $V=W_{f}$
in the previous lemma, then necessarily it holds 
\[
W_{f}=\left\{ x\in c:\, x_{0}=\lim_{i}x_{i}=0\right\} 
\]
since $W_{f}$ has codimension $1$.

\section{\label{sec:Fixed-point}Duality between $W_{f}$ and $\ell_{1}$
and some applications }

We start this section by characterizing the hyperplanes of $c$ such
that their duals are isometric to $\ell_{1}$. Despite its simple
proof, this result plays a central role in our approach since it allows
us conclude the proof of our main result, namely Theorem \ref{thm:Main theorem}.
\begin{prop} \label{thm:hyperplanes with dual equal to l_1}There
exists $j_{0}\in\mathbb{N}$ such that $\left|f_{j_{0}}\right|\geq\dfrac{1}{2}$
if and only if $W_{f}^{*}\simeq\ell_{1}$. \end{prop}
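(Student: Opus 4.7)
My approach is to work with the canonical identification $W_f^*=\ell_1/[f]$ (coming from $c^*=\ell_1$ and $W_f^{\perp}=[f]$) and to prove the two implications independently. The easy direction exhibits an explicit surjective isometry from a coordinate hyperplane of $\ell_1$ onto $\ell_1/[f]$; the hard direction mimics the step $(2)\Rightarrow(1)$ from Theorem~\ref{thm:Hyperplanes of c_0}, pushed into the bidual.

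For the ``$\Leftarrow$'' direction, suppose $|f_{j_0}|\ge\frac12$ and set $H_{j_0}=\{g\in\ell_1:g_{j_0}=0\}$, which is canonically isometric to $\ell_1$. I would show that the restricted quotient map $\pi|_{H_{j_0}}:H_{j_0}\to\ell_1/[f]$ is a surjective isometry. Surjectivity is immediate because every $g\in\ell_1$ is equivalent modulo $[f]$ to $g-(g_{j_0}/f_{j_0})f\in H_{j_0}$. For the isometric property, given $g\in H_{j_0}$ and $\lambda\in\mathbb{R}$, the reverse triangle inequality combined with $\|f\|_1=1$ yields
\[
\|g-\lambda f\|_1=\sum_{i\ne j_0}|g_i-\lambda f_i|+|\lambda||f_{j_0}|\ge \|g\|_1-|\lambda|(1-|f_{j_0}|)+|\lambda||f_{j_0}|=\|g\|_1+|\lambda|\bigl(2|f_{j_0}|-1\bigr)\ge\|g\|_1,
\]
so $\|\pi(g)\|=\inf_\lambda\|g-\lambda f\|_1=\|g\|_1$, and the identification $\ell_1/[f]\simeq H_{j_0}\simeq\ell_1$ follows.

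For the ``$\Rightarrow$'' direction, suppose $W_f^*\simeq\ell_1$. The adjoint of such an isometry provides an isometric embedding $\ell_\infty\hookrightarrow W_f^{**}$, and since $W_f^{**}=W_f^{\perp\perp}=[f]^{\perp}$, the natural identification $c^{**}=\ell_\infty$ via $x\leftrightarrow(\lim x,x_1,x_2,\ldots)$ places $[f]^{\perp}=\{y\in\ell_\infty:\sum_{i\ge0}f_{i+1}y_i=0\}$ as an isometric copy of $\ell_\infty$ inside $\ell_\infty$. Because $\ell_\infty$ is $1$-injective, Proposition~5.13 of \cite{FHHM} produces a norm-$1$ projection $P:\ell_\infty\to[f]^{\perp}$. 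Any such $P$ writes as $P(y)=y-f(y)z$ with $z\in\ell_\infty$ and $f(z)=1$, and repeating the computation of Lemma~\ref{lem:norm projections} over the larger set $B_{\ell_\infty}$ (the signs can now be chosen independently on every coordinate, no convergence is required) gives
\[
\|P\|=\sup_{i\ge 0}\bigl\{\,|1-f_{i+1}z_i|+|z_i|(1-|f_{i+1}|)\,\bigr\}.
\]
The sign argument at the start of the proof of Proposition~\ref{thm:1complemented in c} then forces $z_i=0$ whenever $|f_{i+1}|<\frac12$, and the normalization $f(z)=\sum_{i\ge0}f_{i+1}z_i=1$ demands at least one $i\ge0$ with $|f_{i+1}|\ge\frac12$, i.e.\ some $j\ge1$ with $|f_j|\ge\frac12$.

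The subtle point, which also records exactly why this proposition is strictly weaker than Proposition~\ref{thm:1complemented in c}, lies in the index range in the backward direction. When the ambient space is $c$, the extra constraint $z\in c$ together with $|f_j|\to0$ forces $z_0=\lim_i z_i=0$, so the normalization cannot be achieved through the ``limit coordinate'' and one obtains only $j_0\ge 2$; in $\ell_\infty$ the coordinate $z_0$ is free, and the case $|f_1|\ge\frac12$ becomes admissible via $z_0\ne0$. Beyond this bookkeeping, everything is a direct adaptation of material already developed in Section~\ref{sec:Projectons}, so no genuinely new obstacle is expected.
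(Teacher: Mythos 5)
Your argument is correct, but it takes a genuinely different and more self-contained route than the paper's. The paper proves this proposition in two lines: since $W_f^*\simeq\ell_1/[f]\simeq V_f^*$ with $V_f=\ker f\subset c_0$, the equivalence $(2)\Leftrightarrow(3)$ of Theorem \ref{thm:Hyperplanes of c_0} applies verbatim and nothing more is needed. You instead reprove both implications directly. Your ``$\Leftarrow$'' direction, exhibiting the coordinate hyperplane $H_{j_0}$ and checking $\Vert g-\lambda f\Vert_1\ge\Vert g\Vert_1+|\lambda|\left(2|f_{j_0}|-1\right)\ge\Vert g\Vert_1$, is an elementary substitute for the chain $(3)\Rightarrow(1)\Rightarrow(4)\Rightarrow(2)$ of Theorem \ref{thm:Hyperplanes of c_0}, which the paper delegates to \cite{BC1974} and \cite{LT}. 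Your ``$\Rightarrow$'' direction follows the paper's proof of $(2)\Rightarrow(1)$ in Theorem \ref{thm:Hyperplanes of c_0} up to the norm-one projection $P:\ell_\infty\to[f]^\perp$, but where the paper then invokes Corollary 2 of \cite{B1988} to descend to a projection on $c_0$, you analyze $P$ directly in $\ell_\infty$; this is legitimate, since over $B_{\ell_\infty}$ the extremal signs can be chosen coordinatewise with no convergence constraint, so the analogue of formula (\ref{eq:norm of P}) of Lemma \ref{lem:norm projections} holds exactly, and your closing observation correctly isolates why the free limit coordinate $z_0$ lowers the threshold from $j_0\ge2$ to $j_0\ge1$. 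What each approach buys: the paper's is shorter but leans on three external citations; yours is longer but needs only the $1$-injectivity of $\ell_\infty$ and computations already present in Section \ref{sec:Projectons}. The one point to polish is the same one the paper glosses over in Proposition \ref{thm:1complemented in c}: the ``sign argument'' tacitly assumes $1-f_{i+1}z_i\ge0$; in the remaining case $f_{i+1}z_i>1$ the norm-one inequality gives $|z_i|\le2$ while $f_{i+1}z_i>1$ forces $|z_i|>1/|f_{i+1}|$, so again $|f_{i+1}|>\frac{1}{2}$ and the conclusion stands.
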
 \begin{proof}
Let us denote by $V_{f}$ the subspace of $c_{0}$ defined as $V_{f}=\ker f\subset c_{0}$.
Since 
\[
V_{f}^{*}\simeq\nicefrac{\ell_{1}}{\left[f\right]}
\]
then $W_{f}^{*}\simeq V_{f}^{*}$. By Theorem \ref{thm:Hyperplanes of c_0}
we have that $W_{f}^{*}\simeq\ell_{1}$ if and only if there exists
$j_{0}\in\mathbb{N}$ such that $\left|f_{j_{0}}\right|\geq\dfrac{1}{2}$.
\end{proof} A consequence of the main result of our paper (Theorem
\ref{thm:Main theorem}) is the classification of the hyperplanes
of $c$ such that their duals are isometric copies of $\ell_{1}.$ 
\begin{rem}
\label{Remark classification l_1 dual}The hyperplanes $W_{f}$ of
$c$ such that $W_{f}^{*}\simeq\ell_{1}$ can be divided into three
distinct classes: 
\begin{itemize}
\item $W_{f}\simeq c$ (or, equivalently, there exists $j_{0}\geq2$ such
that $\left|f_{j_{0}}\right|\geq\frac{1}{2}$); 
\item $W_{f}$ is isometric neither to $c$ nor to $c_{0}$ (or, equivalently,
$\frac{1}{2}\leq\left|f_{1}\right|<1$ and $\left|f_{j}\right|<\frac{1}{2}$
for every $j\geq2$); 
\item $W_{f}\simeq c_{0}$ (or, equivalently, $\left|f_{1}\right|=1$). 
\end{itemize}
\end{rem}
The most interesting situations occur when $W_{f}$ is isometric neither
to $c$ nor to $c_{0}$. In these cases we study the $\sigma(\ell_{1},W_{f})$-limit
of the standard basis of $\ell_{1}$. It is worth to mention that
in order to reach this aim we explicitly describe the duality between
$W_{f}$ and $\ell_{1}$. \begin{thm} \label{thm:convergence of e_n}Let
$W_{f}\subset c$ be such that $W_{f}^{*}\simeq\ell_{1}$, $\frac{1}{2}\leq\left|f_{1}\right|<1$
and $\left|f_{j}\right|<\frac{1}{2}$ for every $j\geq2$. If $\left\{ e_{n}\right\} $
is the standard basis of $\ell_{1}$, then 
\[
e_{n}\overset{\sigma(\ell_{1},W_{f})}{\longrightarrow}\hat{e},
\]
where $\hat{e}=\left(-\frac{f_{2}}{f_{1}},-\frac{f_{3}}{f_{1}},-\frac{f_{4}}{f_{1}},\dots\right).$
\end{thm}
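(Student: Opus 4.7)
The plan is to make the isomorphism $W_f^* \simeq \ell_1$ (guaranteed by Proposition~\ref{thm:hyperplanes with dual equal to l_1}) concrete by exhibiting an explicit pairing, after which the stated convergence reduces to a one-line calculation using $f(x) = 0$.

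To describe the duality, I would pair $a \in \ell_1$ and $x \in W_f$ by $\langle a, x \rangle := \sum_{n=1}^{\infty} a_n x_n$, and claim that the induced map $\Psi: \ell_1 \to W_f^*$ is an isometric isomorphism. The bound $\|\Psi(a)\|_{W_f^*} \leq \|a\|_1$ is immediate. For the reverse inequality, for each $N$ I would build $x^{(N)} \in c$ with $x^{(N)}_n = \operatorname{sgn}(a_n)$ for $1 \leq n \leq N$ and $x^{(N)}_n = c_N$ for $n > N$, with the constant $c_N$ forced by $f(x^{(N)}) = 0$; as $N \to \infty$, $c_N \to -\frac{1}{f_1} \sum_{n \geq 1} f_{n+1} \operatorname{sgn}(a_n)$, which has absolute value at most $(1 - |f_1|)/|f_1|$, itself at most $1$ precisely because $|f_1| \geq \tfrac{1}{2}$. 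After a negligible rescaling to ensure $\|x^{(N)}\|_\infty \leq 1$, the identity $\langle a, x^{(N)} \rangle \to \|a\|_1$ follows. For surjectivity of $\Psi$, any $\phi \in W_f^*$ is extended via Hahn--Banach to $\tilde{\phi} = (b_1, b_2, \ldots) \in \ell_1 = c^*$ with $\|b\|_1 = \|\phi\|$; then the relation $x_0 = -\tfrac{1}{f_1} \sum_{n \geq 1} f_{n+1} x_n$ on $W_f$ rewrites $\phi(x)$ as $\sum_{n \geq 1} a_n x_n$ with $a_n = b_{n+1} - (b_1/f_1) f_{n+1}$, and $|f_1| \geq \tfrac{1}{2}$ again yields $\|a\|_1 \leq \|b\|_1$.

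With the duality in place, the convergence is immediate. For every $x \in W_f \subset c$, $\langle e_n, x \rangle = x_n \to x_0 = \lim_i x_i$ as $n \to \infty$, while
\[
\langle \hat{e}, x \rangle = -\frac{1}{f_1} \sum_{n=1}^{\infty} f_{n+1} x_n = x_0,
\]
the last equality being a direct rewriting of $f(x) = 0$. Hence $\langle e_n, x \rangle \to \langle \hat{e}, x \rangle$ for every $x \in W_f$, which is exactly $e_n \to \hat{e}$ in $\sigma(\ell_1, W_f)$.

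The main obstacle is the first step, the isometric identification. The hypothesis $|f_1| \geq \tfrac{1}{2}$ is used in an essential way to bound the tail constant $c_N$ by $1$, which is what allows the pairing to realize the full $\ell_1$-norm and makes $\Psi$ an isometry rather than a mere bounded injection; the same bound, viewed from the dual side, is also what forces $\|a\|_1 \leq \|b\|_1$ in the Hahn--Banach step.
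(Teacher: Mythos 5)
Your proposal is correct and follows essentially the same route as the paper: the same pairing $\sum_n a_n x_n$, the same test vectors with sign entries up to $N$ and a constant tail forced by $f(x)=0$, and the same final computation $x_n\to x_0=-\frac{1}{f_1}\sum_{j\ge 1}f_{j+1}x_j$. The only differences are cosmetic: you handle the tail constant by a limiting rescaling where the paper bounds $\left|x_0^N\right|\le 1$ directly for large $N$, and you spell out the Hahn--Banach surjectivity argument that the paper leaves as ``easy to see.''
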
 \begin{proof} Consider the map $\phi:\ell_{1}\rightarrow W_{f}^{*}$
defined by 
\[
(\phi(y))(x)=\sum_{j=1}^{+\infty}x_{j}y_{j},
\]
where $y=(y_{1},y_{2},\dots)\in\ell_{1}$ and $x=(x_{1},x_{2},\dots)\in W_{f}$.

It is easy to see that $\phi(\ell_{1})=W_{f}^{*}$, and, for any $y\in\ell_{1}$,
\begin{equation}
\left\Vert \phi(y)\right\Vert _{W_{f}^{*}}\leq\left\Vert y\right\Vert _{\ell_{1}}.\label{from below}
\end{equation}
Now, for a given $y=(y_{1},y_{2},\dots)\in\ell_{1}$, consider the
points $x^{N}$, $N=1,2,\dots$, defined as 
\[
x^{N}=\left({\rm sgn}(y_{1}),\,{\rm sgn}(y_{2}),\dots,\,{\rm sgn}(y_{N}),\, x_{0}^{N},x_{0}^{N},\dots\right),
\]
where $x_{0}^{N}$ satisfies the following equation 
\[
f_{1}x_{0}^{N}+\sum_{j=1}^{N}f_{j+1}{\rm sgn}(y_{j})+x_{0}^{N}\sum_{j=N+1}^{+\infty}f_{j+1}=0.
\]
It is clear that $x^{N}\in W_{f}$ for all $N$. Moreover, for any
$N\geq N_{0}$, where $N_{0}$ is such that $\sum_{j=N_{0}+1}^{+\infty}\left|f_{j+1}\right|<\frac{1}{2}$,
we have $x^{N}\in B_{W_{f}}$. Indeed, for every $N\geq N_{0}$, we
have 
\[
\left|x_{0}^{N}\right|=\left|\dfrac{-\sum_{j=1}^{N}f_{j+1}{\rm sgn}(y_{j})}{f_{1}+\sum_{j=N+1}^{+\infty}f_{j+1}}\right|\leq\dfrac{\sum_{j=1}^{N}\left|f_{j+1}\right|}{\left|f_{1}+\sum_{j=N+1}^{+\infty}f_{j+1}\right|}
\]
\[
=\dfrac{1-\left|f_{1}\right|-\sum_{j=N+1}^{+\infty}\left|f_{j+1}\right|}{\left|f_{1}+\sum_{j=N+1}^{+\infty}f_{j+1}\right|}\leq
\]
\[
\leq\dfrac{\frac{1}{2}-\sum_{j=N+1}^{+\infty}\left|f_{j+1}\right|}{\frac{1}{2}-\sum_{j=N+1}^{+\infty}\left|f_{j+1}\right|}=1.
\]
Now, for every $N\geq N_{0}$, we have 
\[
\left\Vert \phi(y)\right\Vert _{W_{f}^{*}}\geq\left|(\phi(y))\left(x^{N}\right)\right|=\left|\sum_{j=1}^{N}\left|y_{j}\right|+x_{0}^{N}\sum_{j=N+1}^{+\infty}y_{j}\right|.
\]
Letting $N\rightarrow+\infty$ we get 
\begin{equation}
\left\Vert \phi(y)\right\Vert _{W_{f}^{*}}\geq\left\Vert y\right\Vert _{\ell_{1}}.\label{from above}
\end{equation}
By (\ref{from below}) and (\ref{from above}) we conclude that the
map $\phi$ is an isometry.

Finally, for any $x=(x_{1},x_{2},\dots)\in W_{f}$, we obtain 
\[
\lim_{n\to\infty}(\phi(e_{n}))(x)=\lim_{n\to\infty}x_{n}=x_{0}=-\frac{1}{f_{1}}\sum_{j=1}^{\infty}f_{j+1}x_{j}=(\phi(\hat{e}))(x).
\]

\end{proof}

The last result has some interesting consequences.

First of all, by recalling Lemma 2 in \cite{A1992}, that asserts
``that the $w^{*}$-closure of the $\ell_{1}$-(standard) basis is
the only thing that is important'' to describe the structure of an
$\ell_{1}$-predual space, we obtain a complete isometric descriptions
of the preduals of $\ell_{1}$ under the additional assumption that
its standard basis is $\sigma(\ell_{1},X)$-convergent. 
\begin{cor}
Let $X$ be a Banach space such that $X^{*}=\ell_{1}$. If the standard
basis $\left\{ e_{n}\right\} $ of $\ell_{1}$ is a $\sigma(\ell_{1},X)$-convergent
sequence, then there exists $f\in\ell_{1}$ with $\left\Vert f\right\Vert _{\ell_{1}}=1$
such that $X$ is isometric to $W_{f}$.
\end{cor}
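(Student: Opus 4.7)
The plan is to read off from $X$ the $\sigma(\ell_{1},X)$-limit $\hat{e}=(a_{1},a_{2},\ldots)\in\ell_{1}$ of the standard basis $\{e_{n}\}$, build an explicit $f\in\ell_{1}$ whose hyperplane $W_{f}\subset c$ realises the same limit, and then conclude via Lemma~2 of \cite{A1992}, which states that the weak*-closure of $\{e_{n}\}$ determines the isometric class of an $\ell_{1}$-predual. Since $\|e_{n}\|_{\ell_{1}}=1$, the weak*-lower semicontinuity of the norm yields $\|\hat{e}\|_{\ell_{1}}\leq1$.

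I would set
\[
f_{1}=\frac{1}{1+\|\hat{e}\|_{\ell_{1}}},\qquad f_{j+1}=-a_{j}f_{1}\quad(j\geq1).
\]
A direct computation gives $\|f\|_{\ell_{1}}=|f_{1}|(1+\|\hat{e}\|_{\ell_{1}})=1$ and $|f_{1}|\in[\frac{1}{2},1]$, so Proposition~\ref{thm:hyperplanes with dual equal to l_1} ensures $W_{f}^{*}\simeq\ell_{1}$. By construction $-f_{j+1}/f_{1}=a_{j}$, which is exactly the pattern of the formula appearing in Theorem~\ref{thm:convergence of e_n}.

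The main step is then to check that the $\sigma(\ell_{1},W_{f})$-limit of $\{e_{n}\}$ coincides with $\hat{e}$, via a split into three sub-cases. First, if $\hat{e}=0$, then $f=e_{1}$ and $W_{f}=c_{0}$, for which $e_{n}\to 0$ weak* is classical. Second, if $\hat{e}\neq 0$ but $\hat{e}$ is not a signed standard basis vector of $\ell_{1}$, then $|a_{j}|<1$ for every $j$; combined with $\|\hat{e}\|_{\ell_{1}}\leq 1$ this forces $|f_{j}|<\frac{1}{2}$ for every $j\geq 2$, so that Theorem~\ref{thm:convergence of e_n} applies verbatim and delivers $e_{n}\overset{\sigma(\ell_{1},W_{f})}{\longrightarrow}\hat{e}$. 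Third, if $\hat{e}=\pm e_{k}$ for some $k$, then $|f_{k+1}|=\frac{1}{2}$ and Theorem~\ref{thm:Main theorem} guarantees $W_{f}\simeq c$; here I would argue directly through the quotient description $W_{f}^{*}\simeq\ell_{1}/[f]$ to see that $e_{1}$ and $\pm e_{k}$ become congruent modulo $[f]$, so that the weak*-limit of $\{e_{n}\}$ in $W_{f}^{*}$ does correspond to $\hat{e}$ under the quotient identification.

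Once the weak*-closures of $\{e_{n}\}$ in $X^{*}$ and in $W_{f}^{*}$ are seen to coincide as pointed compact subsets of $\ell_{1}$, Lemma~2 of \cite{A1992} yields the desired isometry $X\simeq W_{f}$. I expect the main obstacle to be the boundary case $\hat{e}=\pm e_{k}$: there Theorem~\ref{thm:convergence of e_n} is unavailable, and one has to untangle the non-canonical identification $\ell_{1}/[f]\simeq\ell_{1}$ to certify that the weak*-closures truly match; the remaining bookkeeping follows from results established earlier in the paper.
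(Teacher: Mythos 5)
Your proposal is correct and follows essentially the same route as the paper: the same explicit formula for $f$ in terms of $\hat{e}$, then Theorem \ref{thm:convergence of e_n} combined with Lemma 2 of \cite{A1992} in the generic case, and a separate treatment of the degenerate case $\hat{e}=\pm e_{k}$ (where the paper simply asserts $X\simeq c$). Your extra subcase $\hat{e}=0$ is a sensible refinement, since Theorem \ref{thm:convergence of e_n} as stated requires $\left|f_{1}\right|<1$, and there $f=e_{1}$ and $W_{f}=c_{0}$.
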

\begin{proof} Let $\hat{e}=(\hat{e}_{1},\hat{e}_{2},...)\in\ell_{1}$
be the $\sigma(\ell_{1},X)$-limit of $\left\{ e_{n}\right\} $. If
$\left|\hat{e}_{j}\right|<1$ for every $j\geq1$, then by Theorem
\ref{thm:convergence of e_n} and Lemma 2 in \cite{A1992} we have
that $X\simeq W_{f}$ where $f=(f_{1},f_{2},...)\in\ell_{1}$ is defined
by 
\[
f_{1}=\frac{1}{1+\sum_{n=1}^{\infty}|\hat{e}_{n}|},\quad f_{n}=-\frac{\hat{e}_{n-1}}{1+\sum_{n=1}^{\infty}|\hat{e_{n}}|}\:{\rm for}\,{\rm every}\, n\geq2.
\]
Now, it remains to consider the case where $\hat{e}=e_{m}$ for a
fixed $m\geq1$. Under this assumption it is easy to show that $X\simeq c$
and hence $X\simeq W_{f}$ for every $f\in\ell_{1}$ such that there
exists $j_{0}\geq2$ such that $\left|f_{j_{0}}\right|\geq\frac{1}{2}$.
\end{proof} Finally, we obtain some additional information about the isometric structure
of the hyperplanes $W_{f}$. Let us introduce the following notations:
by $\alpha$ we denote a countable ordinal and $C(\alpha)$ is the
space of all continuous real-valued functions on the ordinals less
than or equal to $\alpha$ with the order topology.
\begin{cor}
\label{cor:ordinals}There exists a countable ordinal $\alpha$ such
that $W_{f}$ is isometric to a quotient of $C(\alpha)$ if and only
if one of the following conditions holds:
\begin{enumerate}
\item there exists an index $j_{0}\geq2$ such that $\left|f_{j_{0}}\right|\geq\frac{1}{2}$;
\item $\frac{1}{2}\leq\left|f_{1}\right|\leq1$ and $\left|f_{j}\right|<\frac{1}{2}$
for all $j\geq2$ and $f=\left( f_1,f_2,...,f_n,0,0,...,0,...\right) $ for some $n\in \mathbb{N}$.
\end{enumerate}
\end{cor}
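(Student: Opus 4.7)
The plan is to handle both directions separately, combining Theorem \ref{thm:convergence of e_n} with standard facts about $L_{1}$-preduals and quotients of $C(K)$ spaces.

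For the \emph{sufficiency} direction I would treat three subcases. If condition (1) holds, then by Theorem \ref{thm:Main theorem} we have $W_{f}\simeq c= C([0,\omega])$, a quotient of itself. If (2) holds with $\left|f_{1}\right|=1$, then $W_{f}\simeq c_{0}$ and the map $q\colon c\to c_{0}$ given by $q(x)=\bigl((x_{2k-1}-x_{2k})/2\bigr)_{k\geq 1}$ is a norm-one surjection with $\|q(x)\|=\inf\{\|y\|:q(y)=q(x)\}$, hence realises $c_{0}$ as an isometric quotient of $C([0,\omega])$. If (2) holds with $\frac{1}{2}\leq\left|f_{1}\right|<1$ and $f$ supported in $\{1,\dots,n\}$, I would construct an isometric $w^{*}$-continuous embedding $\iota\colon W_{f}^{*}\simeq\ell_{1}\to M([0,\omega\cdot n])$ by
\[
\iota(e_{i})=\delta_{\omega\cdot i}\qquad(1\leq i\leq n-1),
\]
\[
\iota(e_{k})=\sum_{i=1}^{n-1}g_{i}\,\delta_{\omega\cdot(i-1)+k}+r\bigl(\delta_{\omega\cdot(n-1)+2k-1}-\delta_{\omega\cdot(n-1)+2k}\bigr)\qquad(k\geq n),
\]
with $g_{i}=-f_{i+1}/f_{1}$ and $r=(2\left|f_{1}\right|-1)/(2\left|f_{1}\right|)$. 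The measures $\iota(e_{k})$ have pairwise disjoint point-supports, so $\|\sum_{k}a_{k}\iota(e_{k})\|=\sum|a_{k}|$ and $\iota$ is isometric, while continuity of functions on $[0,\omega\cdot n]$ forces $\iota(e_{k})\to\sum_{i}g_{i}\delta_{\omega\cdot i}=\iota(\hat{e})$ in $w^{*}$, matching Theorem \ref{thm:convergence of e_n}. The pre-adjoint of $\iota$ is the desired norm-one surjection $C([0,\omega\cdot n])\to W_{f}$.

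For the \emph{necessity} direction, suppose $W_{f}$ is isometric to a quotient of $C(\alpha)$ for some countable $\alpha$. Dualising, $W_{f}^{*}$ embeds isometrically and $w^{*}$-continuously in $M([0,\alpha])\simeq\ell_{1}$; by Lindenstrauss' theorem, the separable $W_{f}^{*}$ is isometric to some $L_{1}(\mu)$, which is purely atomic (since $W_{f}^{*}$ is isomorphic to $\ell_{1}$ and hence has the Schur property), so $W_{f}^{*}\simeq\ell_{1}$ isometrically. Proposition \ref{thm:hyperplanes with dual equal to l_1} then yields $j_{0}$ with $\left|f_{j_{0}}\right|\geq\frac{1}{2}$: the case $j_{0}\geq 2$ is condition (1), and otherwise $\left|f_{1}\right|\geq\frac{1}{2}$ and $\left|f_{j}\right|<\frac{1}{2}$ for $j\geq 2$, and it remains to show that $f$ is finitely supported. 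Assuming for contradiction that $f$ has infinite support, Theorem \ref{thm:convergence of e_n} gives $\hat{e}\neq 0$ with infinitely many non-zero coordinates $g_{k}=-f_{k+1}/f_{1}$, and isometry of $\iota$ forces the measures $\iota(e_{k})$ to have pairwise disjoint point-supports $S_{k}$. Using that $[0,\alpha]$ is $0$-dimensional, I would run a transfinite induction on the Cantor--Bendixson derived sets $\Lambda_{\gamma}$ of $[0,\alpha]$, proving $S_{k}\subset\Lambda_{\gamma}$ for every $k$ with $g_{k}\neq 0$ and every countable ordinal $\gamma$. At the successor stage, given $\beta\in\Lambda_{\gamma}$ isolated in $\Lambda_{\gamma}$, a clopen neighbourhood $V$ of $\beta$ with $V\cap\Lambda_{\gamma}=\{\beta\}$ gives $\int 1_{V}\,d\iota(e_{m})=0$ for every $m\neq m_{\beta}$ with $g_{m}\neq 0$; passing to this subsequence in the convergent sequence $\iota(e_{m})(V)\to\iota(\hat{e})(V)=g_{m_{\beta}}c_{m_{\beta},\beta}$ yields $g_{m_{\beta}}c_{m_{\beta},\beta}=0$ and hence $g_{m_{\beta}}=0$, so no $S_{k}$ with $g_{k}\neq 0$ meets the isolated points of $\Lambda_{\gamma}$. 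Since $[0,\alpha]$ is scattered of countable Cantor--Bendixson rank, $\Lambda_{\gamma}$ eventually vanishes, giving $S_{k}=\emptyset$ and contradicting $\iota(e_{k})\neq 0$.

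The \emph{main obstacle} is the transfinite induction: at each successor stage one must extract clopen neighbourhoods isolating points of $\Lambda_{\gamma}$ using the $0$-dimensionality of $[0,\alpha]$, at limit stages one must pass through intersections of the derived sets, and throughout one must justify extracting the subsequence over $\{m:g_{m}\neq 0\}$ in the $w^{*}$-convergence so that the limit at each stage is correctly identified as $\iota(\hat{e})(V)$.
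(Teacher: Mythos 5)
Your sufficiency argument is essentially the paper's: case (1) via $W_{f}\simeq c$, and for finitely supported $f$ with $\frac{1}{2}\leq\left|f_{1}\right|<1$ an isometric $w^{*}$-continuous embedding of $\ell_{1}$ into $M([0,\omega\cdot n])$ sending $e_{i}\mapsto\delta_{\omega\cdot i}$ and spreading the residual mass $\frac{2\left|f_{1}\right|-1}{\left|f_{1}\right|}$ over points accumulating at $\omega\cdot n$ with signs that cancel in the $w^{*}$-limit. The paper uses $i$ alternating atoms of weight $\frac{2\left|f_{1}\right|-1}{\left|f_{1}\right|\cdot i}$ where you use two of weight $\frac{2\left|f_{1}\right|-1}{2\left|f_{1}\right|}$; both work, and your variant is slightly cleaner. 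Your separate treatment of $\left|f_{1}\right|=1$ is also fine.

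The necessity direction, however, has a genuine gap. The paper disposes of it by citing Proposition 6 of Alspach; you instead try to prove it from scratch, and your first step --- ``$W_{f}^{*}$ embeds isometrically and $w^{*}$-continuously into $M([0,\alpha])\simeq\ell_{1}$, hence by Lindenstrauss' theorem it is an $L_{1}(\mu)$-space, hence $\simeq\ell_{1}$'' --- invokes a principle that is false. A dual Banach space that embeds isometrically (even $w^{*}$-homeomorphically onto a $w^{*}$-closed subspace) into an $L_{1}$-space need not be an $L_{1}(\mu)$-space: the two-dimensional space $V=\{(a,b,c):a+b+c=0\}\subset\ell_{1}^{3}$ has a regular hexagon as unit ball, so it is not isometric to $\ell_{1}^{2}$, yet it is a (finite-dimensional, hence dual) isometric subspace of $\ell_{1}^{3}\subset\ell_{1}$; correspondingly $c/[(0,1,1,1,0,0,\dots)]$ is an isometric quotient of $C(\omega)$ whose dual $V\oplus_{1}\ell_{1}$ is not an $L_{1}(\mu)$-space. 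So being an isometric quotient of some $C(\alpha)$ does not by itself yield $W_{f}^{*}\simeq\ell_{1}$, and your argument never excludes the case $\left|f_{j}\right|<\frac{1}{2}$ for all $j$ (where $W_{f}^{*}=\ell_{1}/[f]$ is genuinely not isometric to $\ell_{1}$ by Proposition \ref{thm:hyperplanes with dual equal to l_1}). Everything downstream --- disjointness of the supports of $\iota(e_{k})$, the identification of the $w^{*}$-limit $\hat{e}$ via Theorem \ref{thm:convergence of e_n} --- presupposes that $W_{f}^{*}\simeq\ell_{1}$ with its standard basis, so the gap is load-bearing. By contrast, your transfinite Cantor--Bendixson induction showing that infinitely supported $\hat{e}$ is impossible is sound in outline (the disjoint-support step via $\|\mu\pm\nu\|=\|\mu\|+\|\nu\|$ is correct, and the successor-stage subsequence extraction does need infinitely many nonzero $g_{m}$, which is exactly the hypothesis); it is a reasonable reconstruction of the content of Alspach's Proposition 6, but it only covers half of what necessity requires.
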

\begin{proof}
First of all, by Theorem \ref{thm:Main theorem} we have that condition
(1) is satisfied if and only if $W_{f}\simeq c$. Now, it remains to
consider the case where $\frac{1}{2}\leq\left|f_{1}\right|\leq1$
and $\left|f_{j}\right|<\frac{1}{2}$ for all $j\geq2$. Let us suppose
$f=(f_{1},f_{2},\dots,f_{n},0,0,\dots)$ for some $n\geq2$. Then
$W_{f}\subset c$ is isometric to a quotient of $C\left(\omega\cdot n\right)$.
To see it, consider the sequence of measures $\mu_{i}$ defined by
$\mu_{i}=\delta_{\omega\cdot i}$ for $i=1,2,\dots,n-1$, and, for
$i=n,n+1,n+2,\dots$, (we put $\delta_{\omega\cdot0+i}:=\delta_{i}$)
\[
\mu_{i}=-\frac{1}{f_{1}}\sum_{j=2}^{n}f_{j}\cdot\delta_{\omega\cdot(j-2)+i}+\frac{2\left|f_{1}\right|-1}{\left|f_{1}\right|\cdot i}\sum_{j=1}^{i}(-1)^{j}\cdot\delta_{\omega\cdot(n-1)+\frac{i\cdot(i-1)}{2}+j}.
\]
Now it is enough to apply Theorem \ref{thm:convergence of e_n} of
the present paper and Proposition 3 in \cite{A1992}, with the mapping
$\phi$ given by $\phi(e_{i})=\mu_{i}$. To show that condition (2)
implies that $W_{f}$ is isometric to a quotient of $C(\alpha)$ for
some $\alpha$, it is sufficient to consider Proposition 6 in \cite{A1992}
and Theorem \ref{thm:convergence of e_n} of the present paper.
\end{proof}


\begin{thebibliography}{1}
\bibitem{A1992} D. E. Alspach. A $\ell_{1}$-predual which is not
isometric to a quotient of $C(\alpha)$. arXiv:math/9204215v1 {[}math.FA{]}
27 Apr 1992.

\bibitem{B1988}M. Baronti. Norm-one projections onto subspaces of
$\ell^{\infty}$. \emph{Arch. Math.} \textbf{51} (1988), 242-246.

\bibitem{BC1974}J. Blatter, E. W. Cheney. Minimal Projections on
Hyperplanes in Sequence Spaces. \emph{Annali Mat. Pura Appl.} \textbf{101}
(1974), 215-227.

\bibitem{DSbook}N. Dunford, J. T. Schwartz. Linear operators. Part
I. General theory. Reprint of the 1958 original. John Wiley \& Sons,
Inc., New York, 1988.

\bibitem{FHHM}M. Fabian, P. Habala, P. Hájek, V. Montesinos Santalucía,
J. Pelant, V. Zizler. Functional analysis and infinite-dimensional
geometry. CMS Books in Mathematics, 8. Springer-Verlag, New York,
2001.

\bibitem{Gutek1991}A. Gutek, D. Hart, J. Jamison, M. Rajagopalan.
Shift operators on Banach spaces. \emph{J. Funct. Anal.} \textbf{101}
(1991), 97\textendash 119.


\bibitem{LT}J. Lindenstrauss, L. Tzafriri. Classical Banach spaces.
I. Sequence spaces. Ergebnisse der Mathematik und ihrer Grenzgebiete,
Vol. 92. Springer-Verlag, Berlin-New York, 1977.\end{thebibliography}
\end{document}